\documentclass[11pt]{article}
\usepackage{amsfonts,amsmath,amssymb}
\usepackage{amscd}
%\usepackaassertionge{amsthm}
\usepackage{latexsym}
\usepackage[english]{babel}

\def\NZQ{\Bbb}               % the font for N,Z,Q,R,C
\def\NN{{\NZQ N}}

\def\ZZ{{\NZQ Z}}

\def\FFF{{\NZQ F}}

\def\binom#1#2{{#1 \choose #2}}

%
%------------------------------------------------
% Symbols in "Fraktur"
%
               % font for "Fraktur"

%
%------------------------------------------------
% Small letters in bold
%

%
\def\opn#1#2{\def#1{\operatorname{#2}}} % to make operators
%------------------------------------------------
\opn\socdeg{socdeg}
\newcommand\Mon{{\operatorname{Mon}}}

\opn\reg{reg}
\opn\supp{supp}
\opn\Soc{Soc}
\opn\Tor{Tor} \opn\Ext{Ext} \opn\End{End}

\newcommand\m{{\operatorname{m}}}

\newtheorem{Theorem}{Theorem}[section]
\newtheorem{Lemma}[Theorem]{Lemma}

\newtheorem{Proposition}[Theorem]{Proposition}
\newtheorem{Remark}[Theorem]{Remark}

\newtheorem{Example}[Theorem]{Example}

\newtheorem{Definition}[Theorem]{Definition}

\newtheorem{Characterization}[Theorem]{Characterization}
\newtheorem{Problem}[Theorem]{Problem}

\newenvironment{proof}[1][Proof]{\textbf{#1.} }{$\Box$\medskip\medskip}

\let\epsilon\varepsilon
\let\phi=\varphi
\textwidth=15cm \textheight=22cm \topmargin=0.5cm
\oddsidemargin=0.5cm \evensidemargin=0.5cm \pagestyle{plain}
%
%           The pf environment of AMSART needs a little help
%
\def\qed{\ifhmode\textqed\fi
   \ifmmode\ifinner\quad\qedsymbol\else\dispqed\fi\fi}
\def\textqed{\unskip\nobreak\penalty50
    \hskip2em\hbox{}\nobreak\hfil\qedsymbol
    \parfillskip=0pt \finalhyphendemerits=0}
\def\dispqed{\rlap{\qquad\qedsymbol}}

%
% ------    END OF GENERAL MACROS    -------
%
% ------    MACROS FOR THIS ARTICLE  -------
%
\def\NZQ{\Bbb}               % the font for N,Z,Q,R,C
\def\NN{{\NZQ N}}

\def\ZZ{{\NZQ Z}}
\opn\Gin{Gin}

\newcommand\lex{{\operatorname{lex}}}

\newcommand\C{{\operatorname{Corn}}}
\newcommand\Shad{{\operatorname{Shad}}}
\newcommand\LexShad{{\operatorname{LexShad}}}

\newcommand\col{{\operatorname{col}}}
\numberwithin{equation}{section}

\renewcommand\footnotemark{}

\begin{document}

\title{Extremal Betti numbers of graded modules$^1$}
\author{Marilena Crupi \\
\ \\
{\footnotesize Department of Mathematics and Computer Science, University of Messina}\\
{\footnotesize Viale Ferdinando Stagno d'Alcontres 31, 98166 Messina, Italy}\\
{\footnotesize e-mail: mcrupi@unime.it}
}
\date{}
\thanks{$^1$ \textbf{Journal of Pure and Applied Algebra, 2016}}
\maketitle
\bibliographystyle{plain}

\begin{abstract}
Let $S$ be a polynomial ring in $n$ variables over a field
$K$ of characteristic $0$. A numerical characterization of all possible extremal Betti numbers of any graded submodule of a finitely generated graded free $S$-module is given.\\
{\bf 2010 Mathematics Subject Classification:} 13B25, 13D02, 16W50.\\
{\bf Keywords:} graded module, monomial module, minimal graded resolution, extremal Betti number.
\makeatletter{\renewcommand*{\@makefnmark}{}
\makeatother}
\end{abstract}
\thispagestyle{empty}

\section{Introduction} Let $S = K[x_1,\dots,x_n]$ be the polynomial ring in $n$ variables over a field
$K$, and $F = \oplus_{i=1}^m Se_i$ ($m\ge 1$) a finitely generated graded free
$S$-module with basis $e_1, \dots, e_m$ in degrees $f_1, \ldots,
f_m$ such that $f_1 \leq f_2 \leq \cdots \leq f_m$. The main purpose of this paper is to characterize the possible extremal Betti numbers (values as well as positions) of a graded submodule $M$ of $F$. Precisely, the following question is analyzed.
\begin{Problem} \label{probl} 
Given two positive integers $n, r$,  $1\le r \le n-1$, $r$ pairs of positive integers $(k_1, \ell_1)$,  $\ldots$, $(k_r, \ell_r)$ such that $n-1 \ge k_1 >k_2 > \cdots >k_r\ge 1$ and $1\le \ell_1 < \ell_2 < \cdots < \ell_r$, and $r$ positive integers $a_1, \ldots, a_r$, under which conditions does there exist a graded submodule $M$ of a finitely generated graded free $S$-module such that $\beta_{k_1, k_1+\ell_1}(M) = a_1$, $\ldots$, $\beta_{k_r, k_r+\ell_r}(M) = a_r$ are its extremal Betti numbers?
 \end{Problem}
 
The \emph{extremal Betti numbers} of a graded $S$-module $M$, introduced by Bayer, Charalambous and Popescu in \cite{BCP}, can be seen as a refinement of the Castelnuovo-Mumford
regularity and of the projective dimension of the module $M$; also, both position and value of any extremal Betti number can be read off the local cohomology modules
$H_{m}^i(M)$, with $m=(x_1, \ldots, x_n)$.  Such special graded Betti numbers are the \emph{non-zero top left corners in a block of zeroes in the Macaulay} \cite{GDS} or \emph{CoCoA Betti diagram} \cite{CNR} of a graded free resolution of $M$. Assume $K$ is a field of characteristic 0. Under this hypothesis,  
the generic initial module $\Gin(M)$, with respect to the graded reverse lexicographic order, of a graded $S$-module $M$ is a strongly stable submodule \cite{KP}, and the extremal Betti numbers of $M$, as well as their positions, are preserved by passing from $M$ to $\Gin(M)$ \cite[Corollary 1]{BCP}. Hence, Problem \ref{probl} is equivalent to the characterization of the possible extremal Betti numbers of a strongly stable submodule of $F$. A numerical characterization of the possible extremal Betti numbers of a graded ideal $I$ of initial degree $\ge 3$ of a standard graded polynomial ring over a field of characteristic $0$ has been given in \cite{CU2}. Recently, Herzog, Sharifan and Varbaro \cite{HSV} have given an alternative characterization (different in nature) of the extremal Betti numbers of such an ideal $I$, including also the case in which $I$ contains quadrics. Furthermore, the behavior of such graded Betti numbers has been studied for the class of lexicographic submodules of a finitely generated graded free $S$-module in \cite{CU1,CU3}, and 
for some classes of squarefree strongly stable submodules of a finitely generated graded free $S$-module in \cite{CF, CF2}.

The plan of the paper is as follows. In Section~\ref{sec1}, some notions that  will be used throughout the
paper are recalled. In Section \ref{sec2}, the possible extremal Betti numbers of a strongly stable ideal of initial degree $2$ are determined (Propositions~\ref{pro:twopairs}, \ref{pro:seven}); the case when the strongly stable ideal has initial degree $\ge 3$ has been analyzed and solved in \cite{CU2}. In Section \ref{sec3}, the possible extremal Betti numbers  (positions as well as values) of a strongly stable submodule of a finitely generated graded free $S$-module are computed (Theorem \ref{thm:possible}). Following the approach  of \cite{CU2}, the characterization is obtained by a detailed description of certain sets of monomials.

\section{Preliminaries} \label{sec1}
Let us consider the polynomial ring $S=K[x_1,\ldots, x_n]$ as an $\NN$-graded ring where each $\deg x_i =1$, and let $F = \oplus_{i=1}^m Se_i$ ($m\ge 1$) be a finitely generated graded free
$S$-module with basis $e_1, \dots, e_m$, where $\deg(e_i) =f_i$ for each $i=1, \ldots,m$, with $f_1 \leq f_2 \leq \cdots \leq f_m$. The elements of the form $x^ae_i$, where $x^a = x_1^{a_1} x_2^{a_2} \dots x_n^{a_n}$ for $a= (a_1, \dots, a_n)\in
\NN_0^n $, are called \textit{monomials} of $F$, and $\deg(x^a e_i) =\deg(x^a) + \deg(e_i)$. In particular, if $F\simeq S^m$ and $e_i = (0, \ldots, 0, 1, 0, \ldots,0)$, where $1$ appears in the $i$-th place, we assume, as usual, $\deg(x^a e_i) = \deg(x^a)$, \emph{i.e.}, $\deg(e_i)= f_i=0$. 

A \textit{monomial submodule} $M$ of $F$ is a submodule generated by monomials, \textit{i.e.}, $M = \oplus_{i=1}^m I_i e_i$, where $I_i$ are the monomial ideals of $S$ generated by those monomials $u$
of $S$ such that $ue_i \in M$ \cite{Ei}; if $m=1$ and $f_1=0$, then a monomial submodule is a monomial ideal of $S$.
If $I$ is a monomial ideal of $S$, we denote by $G(I)$ the
unique minimal set of monomial generators of $I$, by $G(I)_{\ell}$
the set of monomials $u$ of $G(I)$ such that $\deg u = \ell$, and by
$G(I)_{> \ell}$ the set of monomials $u$ of $G(I)$ such that $\deg u
> \ell$. Moreover, for a monomial $1 \neq u \in S$, we set
 \[\supp(u)=\{i: x_i\,\, \textrm{divides}\,\, u\},\qquad \m(u) = \max \{i:i\in \supp(u)\},\]
and $\m(1) = 0$.
A monomial ideal $I$ of $S$ is called \textit{stable} if for all $u \in G(I)$ one has
$(x_j u)/x_{\m(u)} \in I$ for all $j < \m(u)$. $I$  is called \textit{strongly stable} if for all $u \in G(I)$ one has
$(x_j u)/x_i \in I$ for all $i \in \supp(u)$ and all $j < i$ \cite{EK}.

Following \cite{CU3}, we introduce the following definition.

\begin{Definition} \label{def:stable} A graded submodule $M\subseteq F$ is a (strongly) stable submodule if
$M = \oplus_{i = 1}^m I_i e_i$ and $I_i\subsetneq S$ is a (strongly) stable ideal of $S$, for any $i=1, \ldots, m$.
\end{Definition}

For any $\ZZ$-graded finitely generated $S$-module $M$, there is a minimal graded free $S$-resolution \cite{BH}
\[
\FFF. : 0 \rightarrow F_s \rightarrow \cdots \rightarrow F_1
\rightarrow F_0 \rightarrow M \rightarrow 0,
\]
where $F_i = \oplus_{j \in \ZZ}S(-j)^{\beta_{i,j}}$. The
integers $\beta_{i,j} = \beta_{i,j}(M) = \textrm{dim}_K \Tor_i(K,
M)_j $ are called the \emph{graded Betti numbers} of $M$.

If $M = \oplus_{i=1}^m I_ie_i$  is a stable submodule of $F$, then $\beta_{k,\,k+\ell}(M) = \beta_{k,\,k+\ell}(\oplus_{i=1}^m I_ie_i) =
\sum_{i=1}^m \beta_{k,\,{k+\ell - f_i} }(I_i)$, and 
we can use the \emph{Eliahou-Kervaire  formula} \cite{EK} for computing the graded
Betti numbers of $M$:
\begin{equation}\label{eq:EK}
\beta_{k, k+\ell}(M)= \sum_{i=1}^m \left[\sum_{u \in G(I_i)_{\ell-f_i}}\binom {\m(u)-1}{k}\right].
\end{equation}
\begin{Definition} {\em \cite{BCP}} \label{def:extr} A graded Betti number $\beta_{k,k+\ell}(M) \neq 0$ is called {\it
extremal} if $\beta_{i,\, i+j}(M) = 0$ for all $i \geq k$, $j \geq
\ell$, $(i, j) \neq (k, \ell)$.
\end{Definition}
The pair $(k, \ell)$ is called a \emph{corner} of $M$.

From (\ref{eq:EK}), one can deduce the following characterization of the \emph{extremal Betti numbers} of a stable submodule.
\begin{Characterization}\label{char:CU} \label{equiv} Let $M = \oplus_{i=1}^mI_ie_i$ be a stable submodule of $F$.
A graded Betti number $\beta_{k, \, k+ \ell}(M)$ is extremal if and only if \[k + 1 = \max\{\m(u)\, :\, ue_i \in G(M)_{\ell}\,,
i\in\{1, \ldots, m\}\},\] and $\m(u) \leq k$, for all $ue_i \in G(M)_j$ and all $j > \ell$.
\end{Characterization}
As a consequence of the above result, one obtains that \emph{if $\beta_{k, \, k+ \ell}(M)$ is an extremal Betti number of $M$, then
$\beta_{k, \, k+ \ell}(M) = \vert\{ue_i \in G(M)_{\ell}\,:\, \m(u) =
k+1,\, i\in\{1, \ldots, m\}\}\vert$}.

\vspace{0,5cm}
For a monomial submodule $M = \oplus_{i=1}^mI_ie_i$ of $F$, denote by $\mathcal D (M) = \{I_1, \ldots, I_m\}$  the set of monomial ideals appearing in its direct decomposition. 

\begin{Remark} \label{value} \em Let $M = \oplus_{i=1}^mI_ie_i$ be a stable
submodule of $F$. If $\beta_{k, \, k+ \ell}(M)$ is an extremal Betti
number of $M$, then there exist some stable ideals $I_{j_1},
\ldots, I_{j_t}$ $\in \mathcal D(M)$,  $1 \leq j_1 < j_2 < \cdots < j_t \leq
m$, $(k, \ell)$ being a corner of $I_{j_i}e_{j_i}$ ($i=1, \ldots t$). 
Therefore, $\beta_{k, \, k+ \ell}(M) = \sum_{i=1}^t \beta_{k, \, k+
\ell}(I_{j_i}e_{j_i}) = \sum_{i=1}^t \beta_{k, \, k+ \ell-f_{j_i}}(I_{j_i})$
and $1 \leq \beta_{k, \, k+ \ell-f_{j_i}}(I_{j_i}) \leq \binom{k + \ell -f_{j_i}-1}{\ell -f_{j_i}-1}$, for every $i=1, \ldots, t$; $\binom{k + \ell -f_{j_i}-1}{\ell -f_{j_i}-1}$ is the number of all monomials $u$ of $S$
of degree $\ell-f_{j_i}$ with $\m(u) =k+1$.
\end{Remark}

Finally, for later use, let us recall that for a graded ideal $I=\oplus_{j \geq 0}I_j$ of $S$, the \textit{initial degree} of $I$, denoted by $\alpha(I)$, is  the minimum $s$ such that $I_s\neq 0$.

\section{Strongly stable ideals of initial degree $2$} \label{sec2}
In this Section, we analyze the possible extremal Betti numbers of a strongly stable ideal $I$ of $S=K[x_1, \ldots, x_n]$ of initial degree $2$. The case $\alpha(I) \ge 3$ was examined and completely solved in \cite{CU2}. We will see that, if we consider Problem \ref{probl}, the characterization of the possible extremal Betti numbers of a strongly stable ideal $I$ with $\alpha(I)=2$ depends on the number $r$ of the fixed pairs of integers.

We start this Section with some comments. If $S$ is a polynomial ring in $2$ variables, then a strongly stable ideal can have at most one extremal Betti number. If $S= K[x_1,x_2,x_3]$ is a polynomial ring in $3$ variables, then there exist strongly stable ideals $I$ of initial degree $2$ in $S$ with $2$ extremal Betti numbers.
Indeed, the strongly stable ideals $I = (x_1^2, x_1x_2, x_1x_3, x_2^{\ell})$, $\ell >2$, have the pairs $(2,2), (1,\ell)$ as corners.
These remarks justify our next assumption on the number $n$ of the variables of the polynomial ring $S$.

Denote by $\Mon(S)$ the set of monomials of $S$ and by $\Mon_d(S)$ the set of monomials of degree $d$ of $S$. 
For $u, v \in \Mon_d(S)$, $u \geq_{\lex}v$, define the set 
\[\mathcal{L}(u, v)=\{z \in \Mon_d(S) : u \geq_{\lex} z\geq_{\lex} v\},\] where $\ge_{\lex}$ is the usual \emph{lexicographic} ordering on the polynomial ring $S$ with respect to the ordering of the variables $x_1> x_2 >\cdots > x_n$.

\begin{Proposition} \label{pro: n-2} Let $n\ge 4$. A strongly stable ideal $I$ of $S$ of initial degree $2$ with a corner in degree $2$ can have at most $n-2$ extremal Betti numbers.
\end{Proposition}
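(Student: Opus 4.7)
My plan is to split on the value of $k_1$, where $(k_1,2)$ is the corner in degree~$2$. Label the extremal Betti numbers $(k_1,2),(k_2,\ell_2),\ldots,(k_r,\ell_r)$ with $k_1>\cdots>k_r\ge 1$ and $2<\ell_2<\cdots<\ell_r$. By Characterization~\ref{char:CU}, $k_1+1=\max\{\m(u):u\in G(I)_2\}\le n$, hence $k_1\le n-1$. The easy case is $k_1\le n-2$: the strict chain $k_1>\cdots>k_r\ge 1$ gives $r\le k_1\le n-2$ at once.

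The substantive case is $k_1=n-1$. Here some $u\in G(I)_2$ satisfies $\m(u)=n$; iterating strong stability first on $u$ and then on $x_1x_n$, I get $x_1x_j\in I$ for every $j=1,\ldots,n$, so no minimal generator of $I$ of degree $\ge 3$ is divisible by $x_1$ (any such monomial would be a proper multiple of some $x_1x_j\in G(I)_2$). By Characterization~\ref{char:CU} the extremality of $(n-1,2)$ also gives $\m(v)\le n-1$ for every $v\in G(I)_{\ell}$ with $\ell\ge 3$, i.e.\ $x_n\nmid v$. Thus all minimal generators of $I$ in degree $\ge 3$ lie in $K[x_2,\ldots,x_{n-1}]$.

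Now I split according to $k_r$. If $k_r\ge 2$ the $k_i$'s are distinct integers in $[2,n-1]$, so $r\le n-2$. Otherwise $k_r=1$ and the corner $(1,\ell_r)$ must come from a generator $v\in G(I)_{\ell_r}$ with $\m(v)=2$; since $v$ has degree $\ge 3$ and is not divisible by $x_1$, necessarily $v=x_2^{\ell_r}$, and minimality forces $x_2^{\ell_r-1}\notin I$. I will derive a contradiction when $r\ge 3$: the previous corner $(k_{r-1},\ell_{r-1})$ satisfies $k_{r-1}\ge 2$ and $\ell_{r-1}\ge 3$, so there is $w\in G(I)_{\ell_{r-1}}$ with $\m(w)=k_{r-1}+1\ge 3$ supported in $\{x_2,\ldots,x_{n-1}\}$; iterating the strongly stable substitution $x_i\mapsto x_2$ drives $w$ to $x_2^{\ell_{r-1}}\in I$, and since $\ell_r-1\ge \ell_{r-1}$ this contradicts $x_2^{\ell_r-1}\notin I$. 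Hence $k_r=1$ forces $r\le 2\le n-2$, using $n\ge 4$. The main obstacle, and the reason the bound is $n-2$ rather than the naive $n-1$, is exactly this incompatibility: whenever strong stability pushes a pure power $x_2^{\ell_{r-1}}$ into $I$, it blocks any later corner at height $k=1$.
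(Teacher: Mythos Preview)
Your proof is correct. The argument is complete and the key contradiction---that a corner $(k_{r-1},\ell_{r-1})$ with $\ell_{r-1}\ge 3$ forces $x_2^{\ell_{r-1}}\in I$ by strong stability, which kills the minimality of $x_2^{\ell_r}$---is exactly the obstruction that drops the bound from $n-1$ to $n-2$.

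Your organization differs from the paper's. The paper does not split on $k_1$; instead it splits on whether $x_2^2\in G(I)_2$, and in the case $x_2^2\notin G(I)_2$ argues structurally that $G(I)_2=\mathcal{L}(x_1^2,x_1x_{k+1})$ and that $x_2^{\ell}$ must appear as a minimal generator at the \emph{smallest} degree $\ell>2$ carrying generators, so no corner at height $1$ can occur beyond degree $\ell$; it then reads off that either $r=1$ (no such $\ell$) or $r\le 2$ (when $x_2^{\ell}$ itself gives the corner). Your route is more direct about the counting: you isolate the only dangerous configuration ($k_1=n-1$, $k_r=1$) and dispatch it with a single contradiction, while the trivial chain bound handles everything else. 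The paper's approach yields, as a byproduct, an explicit description of $G(I)$ in the extremal situations; your approach is leaner and makes the case analysis fully explicit (in particular, you handle $k_1\le n-2$ and $k_r\ge 2$ in one line each, whereas the paper leaves these cases implicit). Both rest on the same underlying fact: once a generator of degree $\ge 3$ not divisible by $x_1$ exists, strong stability forces a pure power of $x_2$ into $I$, and this is incompatible with a later corner at $k=1$.
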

\begin{proof} If $I\subsetneq S$ is a strongly stable ideal such that $x_2^2\notin G(I)_2$, and $\ell >2$ is the lowest degree for which $G(I)_{\ell}\neq \emptyset$, then $x_2^{\ell}\in G(I)_{\ell}$, necessarily. This implies that there is no extremal Betti number determined by a monomial $u\in G(I)_{> \ell}$ with $\m(u)=2$. If such $\ell$ does not exist, then $I$ is generated in degree $2$. More precisely, $G(I) = \mathcal{L}\left(x_1^2, x_1x_{k+1}\right)$, with $1\le k < n$, whence $I$ has just only one extremal Betti number. Furthermore, one can observe that if $\ell$ exists and $x_2^{\ell}$ determines an extremal Betti number of $I$, then $I$ is generated only in degrees $2$ and $\ell$. More specifically, $G(I)_2 = \mathcal{L}\left(x_1^2, x_1x_{k+1}\right)$, with $1\le k < n$, and $G(I)_{\ell} = \{x_2^{\ell}\}$. Thus, the ideal $I$ can have at most two extremal Betti numbers.
\end{proof}

\begin{Proposition} \label{cor:n-2} Let $n\geq 4$. Given $n-2$ pairs of positive integers
\begin{equation} \label{pairs1}
(k_1, \ell_1), (k_2, \ell_2), \ldots, (k_{n-2}, \ell_{n-2}),
\end{equation}
with $n-1 \ge k_1 >k_2 > \cdots >k_{n-2}\ge 2$ and $\ell_1 = 2 < \ell_2 < \cdots < \ell_{n-2}$,
then there exists a strongly stable ideal $I$ of $S$ of initial degree $2$ and with the pairs in (\ref{pairs1}) as corners if and only if $k_1+1 = n$.
\end{Proposition}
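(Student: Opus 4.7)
The key observation is that the constraints $n-1\ge k_1>k_2>\cdots>k_{n-2}\ge 2$ force $\{k_1,\dots,k_{n-2}\}$ to equal the entire set $\{2,\dots,n-1\}$; in particular $k_1=n-1$, so the condition $k_1+1=n$ is automatic from the hypotheses and the ``only if'' direction is trivial. The proof therefore reduces to producing an explicit strongly stable ideal of initial degree $2$ with the prescribed corners.

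The plan is to take $I$ to be the smallest strongly stable ideal containing monomials $u_1,\dots,u_{n-2}$, where each $u_i$ has degree $\ell_i$ and $\m(u_i)=n-i+1$. First set $u_1=x_1x_n$, whose Borel closure in degree $2$ is $\{x_1x_j:1\le j\le n\}$, giving $G(I)_2$ with maximal $\m$ equal to $n$. For $i\ge 2$, choose $u_i$ inductively, with no $x_1$ factor (else it would be covered by the degree-$2$ part) and of the form ``one $x_2$ together with powers of intermediate variables and one $x_{n-i+1}$'', for example $u_2=x_2^{\ell_2-1}x_{n-1}$, $u_3=x_2x_3^{\ell_3-2}x_{n-2}$, and so on, with the precise shape adjusted according as $i\le (n+1)/2$ or $i>(n+1)/2$. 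Once $I$ is built, verify by Characterization~\ref{char:CU} that each $(n-i,n-i+\ell_i)$ is a corner: the new minimal generators at degree $\ell_i$ are the Borel closure of $u_i$ minus the multiples of the earlier $u_j$'s, with maximal $\m$ equal to $n-i+1$; no new minimal generator appears at intermediate degrees; and at every larger degree the max $\m$ drops strictly. Remark~\ref{value} then pins down the actual values of the extremal Betti numbers.

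The delicate part is making sure that the Borel closure of $u_i$ does not cover any later $u_j$, which would force $u_j$ to be non-minimal. Naive choices such as $u_i=x_i^{\ell_i-1}x_{n-i+1}$ or the pure power $u_i=x_{n-i+1}^{\ell_i}$ break down once $i$ passes the middle, because their Borel closures contain pure powers like $x_{n-i}^{\ell_i}$ which would divide the pure-power candidate $u_{i+1}=x_{n-i}^{\ell_{i+1}}$. The remedy is to always include at least one $x_2$ factor in $u_i$: this shifts the sorted index profile of $u_i$ just enough to exclude those offending pure powers from its Borel closure, and at the very last step permits the choice $u_{n-2}=x_3^{\ell_{n-2}}$. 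This balancing act, between minimality of $u_i$ relative to the previous $u_j$'s and non-interference of its own Borel closure with the subsequent $u_j$'s, is the technical core of the argument.
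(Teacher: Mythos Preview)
Your approach matches the paper's: both note (the paper implicitly, by beginning ``It is enough to show that\ldots'') that the constraints $n-1\ge k_1>\cdots>k_{n-2}\ge 2$ already force $k_i=n-i$ and hence $k_1+1=n$, so only the construction is needed; both then build an explicit strongly stable ideal by specifying generators at each corner degree $\ell_i$ with maximal $\m$ equal to $k_i+1$, splitting the description at a threshold index which the paper writes as $s=\max\{i:i\le k_i+1\}$ and which equals your $\lfloor(n+1)/2\rfloor$ when $k_i=n-i$.

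The difference is completeness. The paper writes down closed formulas for $G(I)_{\ell_i}$ as lex-segments $\mathcal{L}(\cdot,\cdot)$: for $2\le i\le s$ the bottom element is $x_2^{\ell_2-2}x_3^{\ell_3-\ell_2}\cdots x_i^{\ell_i-\ell_{i-1}+1}x_{k_i+1}$, and for $i>s$ the set is a single explicitly given monomial. You give only $u_1,u_2,u_3$ and $u_{n-2}$, with ``and so on'' for the rest; your $u_3=x_2x_3^{\ell_3-2}x_{n-2}$ already diverges from the paper's bottom element $x_2^{\ell_2-2}x_3^{\ell_3-\ell_2+1}x_{n-2}$ whenever $\ell_2>3$ (yours is lex-smaller, hence has a larger Borel closure). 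Without a general formula in hand, the central claim you rightly flag as ``the technical core''---that the Borel closure of each $u_i$ never produces a divisor of any later $u_j$---cannot actually be checked. What you have is a correct outline of the paper's argument rather than a proof; supplying the explicit monomials as the paper does is exactly what is missing.
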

\begin{proof}  It is enough to show that, given two positive integers $n, r$, $1\leq r \leq n-2$, $r$ pairs of integers
\begin{equation} \label{pairs2}
(k_1, \ell_1), (k_2, \ell_2), \ldots, (k_r, \ell_r)
\end{equation}
with $n-1 \ge k_1 >k_2 > \cdots >k_r\ge 2$ and $2=\ell_1 < \ell_2 < \cdots < \ell_r$,
then there exists a strongly stable ideal $I$ of $S$ with $\alpha(I)=2$ having the pairs in (\ref{pairs2}) as corners. 

Indeed, setting $s=\max\{i : i\leq k_i+1\}$, the required monomial ideal $I$ can be constructed as follows:
\begin{enumerate}
\item[-] $G(I)_2 = \mathcal{L}\left(x_1^2, x_1x_{k_1+1}\right)$;
\item[-] $G(I)_{\ell_i} = \mathcal{L}\left(x_2^{\ell_2-2}x_3^{\ell_3-\ell_2}\cdots x_i^{\ell_i-\ell_{i-1}+2}, x_2^{\ell_2-2}x_3^{\ell_3-\ell_2} \cdots x_i^{\ell_i-\ell_{i-1}+1}x_{k_i+1}\right)$, for $i=2, \ldots, s$;
\item[-] $G(I)_{\ell_i}  =  \left \{x_2^{\ell_2-2}x_3^{\ell_3-\ell_2}\cdots x_{k_i}^{\ell_{k_i}-\ell_{k_i-1}-1}x_{k_i+1}^{3+\ell_i-\ell_{k_i}}\right\}$, for $i=s+1, \ldots, r$.
\end{enumerate}
It is worth observing that these choices imply $\beta_{k_i, \,k_i+\ell_i}(I)=1$, for all $i$.
\end{proof}

The next example illustrates the above results.
\begin{Example}\em
Set $S=K[x_1, \ldots,x_8]$. We may construct the strongly stable ideal $I = (x_1^2, x_1x_2, x_1x_3, x_1x_4,$ $x_1x_5, x_1x_6,x_1x_7,$ $x_1x_8, x_2^4, x_2^3x_3,x_2^3x_4,x_2^3x_5,
x_2^3x_6,x_2^2x_3^4,x_2^2x_3^3x_4,x_2x_3^8)$ $\subsetneq S$ having the pairs $(7,2)$, $(5,4), (3,6), (2,9)$ as corners, as well as
the strongly stable ideal
$J = (x_1^2$, $x_1x_2$, $x_1x_3$, $x_1x_4$,  $x_1x_5$, $x_1x_6$, $x_1x_7$, $x_1x_8$, $x_2^4$, $x_2^3x_3$, $x_2^3x_4$, $x_2^3x_5$,
$x_2^3x_6$, $x_2^3x_7$, $x_2^2x_3^3$, $x_2^2x_3^2x_4$, $x_2^2x_3^2x_5$, $x_2^2x_3^2x_6$, $x_2^2x_3x_4^4$, $x_2^2x_3x_4^3x_5$,
$x_2^2x_4^7$, $x_2x_3^9)\subsetneq S$ having the pairs $(7,2)$, $(6,4)$, $(5,5)$, $(4,7)$, $(3,9)$, $(2,10)$ as corners.
\end{Example}

Now, let $\mathcal{M}$ be a set of monomials of degree $d$ of $S$. 
The set of monomials of degree $d+1$ of $S$
\[
\Shad(\mathcal{M})=\{x_iu : \mbox{$ \, u \in \mathcal{M}$, \,\,  $i=1, \ldots, n$}\}
\]
is called the \textit{shadow} of $\mathcal{M}$. We define the $i$-th \emph{shadow} recursively by $\Shad^i(\mathcal{M})=\Shad(\Shad^{i-1}(\mathcal{M}))$. 
Moreover, we denote by $\min(\mathcal{M})$ the smallest monomial of $\mathcal{M}$ with respect to the lexicographic ordering on $S$. Setting $w=\min(\mathcal{M})$, if $\ell >d$ is an integer, we define the following set of monomials of degree $\ell$ in $S$:
\[\LexShad^{\ell-d}(\mathcal{M})=\mathcal{L}(x_1^\ell, wx_n^{\ell-d});\] 
we call such a set the \emph{lexicographic shadow} of $\mathcal{M}$.

Finally, given two positive integers $k, d$, with  $1\le k <n$ and $d \geq 2$, we introduce the following sets of monomials:
\[A(k, d) = \{ u \in \Mon_d(S) : \m(u) = k+1\}, \,\, A(\leq k, d) = \{ u \in \Mon_d(S) : \m(u) \leq k+1\}.\]

Setting $A(k, d) = \{u_1, \ldots, u_q\}$, we can suppose, after a permutation of the indices, that
\begin{equation} \label{vert}
u_1>_{\lex} u_2 >_{\lex} \cdots   >_{\lex} u_q;
\end{equation}
moreover, for the $i$-th monomial $u$ of degree $d$ with $\m(u) = k + 1$, we mean the monomial of $A(k, d)$ that appears in the $i$-th position of (\ref{vert}), for $1\le i \le q$.

\vspace{0,5cm}
From now on, we assume $S=K[x_1, \ldots, x_n]$ endowed with the lexicographic order $ >_{\lex}$ induced by the ordering $x_1 > x_2 > \cdots > x_n$.
\begin{Proposition} \label{pro:twopairs} Let $n\ge 4$ be an integer. Let
$(k_1, \ell_1), (k_2, \ell_2)$ be two pairs of positive integers such that $n-1 \ge k_1 >k_2 \ge 2$ and
$2 = \ell_1 < \ell_2$,
and let $a_1, a_2$ be two positive integers. If $K$ is a field of characteristic $0$, then the following conditions are equivalent:
\begin{enumerate}
\item[\em (1)] there exists a graded ideal $J \subsetneq S$, with extremal Betti numbers
$\beta_{k_1, k_1+\ell_1}(J)$ $= a_1, \beta_{k_2, k_2+\ell_2}(J) = a_2$;
\item[\em (2)] there exists a strongly stable ideal $I \subsetneq S$, with extremal Betti numbers
$\beta_{k_1, k_1+\ell_1}(I) = a_1, \beta_{k_2, k_2+\ell_2}(I) = a_2$;
\item[\em (3)] the integers $a_i$ satisfy the conditions:
\[1 \leq a_i\leq \vert A_i \setminus \LexShad^{\ell_i-\ell_{i-1}}(A_{i-1})\vert, \  \  \ \textrm{for} \  \  i=1,2, \]
where $A_0=\emptyset$, $A_1 = \{u \in A(k_1, \ell_1=2) : u \geq_{\lex}x_{k_2}x_{k_1+1}\}$ and $A_2 = \{u \in A(k_2, \ell_2) : u \geq_{\lex}x_{k_2+1}^{\ell_2}\}$.
\end{enumerate}
%In particular, $A_2\setminus \LexShad^{\ell_2-2}(A_1) = \{x_{k_2+1}^{\ell_2}\}$.
\end{Proposition}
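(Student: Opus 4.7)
The plan is to establish the cyclic implications. First, (1) $\Leftrightarrow$ (2) follows from the standard generic-initial-ideal argument in characteristic zero: the generic initial ideal $\Gin(J)$ with respect to the graded reverse lex order is strongly stable by the result of \cite{KP} recalled in the introduction, and both positions and values of the extremal Betti numbers of $J$ are preserved under the passage to $\Gin(J)$ by \cite[Corollary~1]{BCP}. Hence the core of the proof is the equivalence (2) $\Leftrightarrow$ (3).

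For (2) $\Rightarrow$ (3), I would apply Characterization~\ref{equiv} to rewrite each extremal Betti value as a count of minimal generators of $I$ with a specified $\m$-value, which immediately gives $a_i\ge 1$. The upper bounds arise from the interaction of the two corners via strong stability. Concretely, for $a_1$: if some generator $u\in G(I)_2$ with $\m(u)=k_1+1$ were lex-smaller than $x_{k_2}x_{k_1+1}$, i.e. $u=x_j x_{k_1+1}$ with $j\ge k_2+1$, then iterated application of the strongly stable property would force $x_{k_2+1}^2\in I_2$. The $(\ell_2-2)$-th shadow of $x_{k_2+1}^2$ already covers all of $A(k_2,\ell_2)$, so no minimal generator in degree $\ell_2$ could have $\m=k_2+1$, contradicting $a_2\ge 1$. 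Thus every generator counted by $a_1$ lies in $A_1$, giving $a_1\le |A_1|=|A_1\setminus\LexShad^{\ell_1}(A_0)|$. The bound on $a_2$ is obtained by an analogous lex-segment analysis: the minimal generators in $A_2$ are precisely those monomials that escape $\Shad^{\ell_2-2}(I_2)$, and tracking the strongly stable closure of the $a_1$ chosen elements of $A_1$ (whose lex-minimum is $\min(A_1)=x_{k_2}x_{k_1+1}$) shows that the surviving candidates sit in $A_2\setminus \LexShad^{\ell_2-\ell_1}(A_1)$.

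For (3) $\Rightarrow$ (2), I would construct $I$ explicitly, following the pattern of Proposition~\ref{cor:n-2}. Take as degree-$2$ generators the $a_1$ lex-largest elements of $A_1$ together with their strongly stable closure, and as degree-$\ell_2$ generators the $a_2$ lex-largest elements of $A_2\setminus\LexShad^{\ell_2-\ell_1}(A_1)$ together with their strongly stable closure. By the Eliahou--Kervaire formula \eqref{eq:EK} and Characterization~\ref{equiv}, the resulting ideal is strongly stable and has $\beta_{k_i,k_i+\ell_i}(I)=a_i$ as its only extremal Betti numbers, with no further corners introduced because every generator with higher $\m$ would violate the chosen lex bounds.

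The hard step is the upper bound on $a_2$ in (2) $\Rightarrow$ (3): it requires matching the abstract lex segment $\LexShad^{\ell_2-\ell_1}(A_1)=\mathcal L(x_1^{\ell_2},\,x_{k_2}x_{k_1+1}x_n^{\ell_2-2})$ with the actual shadow $\Shad^{\ell_2-2}(I_2)\cap A(k_2,\ell_2)$. This demands a careful combinatorial analysis, via the interplay between strong-stability closure and lex order on $\Mon_{\ell_2}(S)$, of exactly which degree-$\ell_2$ monomials with $\m=k_2+1$ possess a degree-$2$ divisor inside the closure of $A_1$ — an analysis analogous to the one used in \cite{CU2} for ideals of initial degree $\ge 3$.
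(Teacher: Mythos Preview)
Your plan follows the paper's proof essentially step for step: the same $\Gin$ argument for (1)$\Leftrightarrow$(2), the same contradiction argument for the bound on $a_1$ (if a degree-$2$ generator $x_jx_{k_1+1}$ with $j\ge k_2+1$ appeared, strong stability would put $x_{k_2+1}^2$ into $I_2$ and kill the corner $(k_2,\ell_2)$), and an explicit lex-interval construction for (3)$\Rightarrow$(2). Your construction ``lex-largest elements plus strongly stable closure'' is, in degree $2$, exactly the paper's interval $G(I)_2=\{v\in A(\le k_1,2):x_1^2\ge_{\lex}v\ge_{\lex}m(a_1)\}$; in degree $\ell_2$ the paper likewise takes a lex interval $\{z\in A(\le k_2,\ell_2): m(a_1)x_n^{\ell_2-2}>_{\lex}z\ge_{\lex}m(a_2)\}$ rather than a strongly stable closure, but the intent is the same.

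The one place where you diverge from the paper is your assessment of the bound on $a_2$. You flag it as the ``hard step'' requiring a careful comparison of $\Shad^{\ell_2-2}(I_2)\cap A(k_2,\ell_2)$ with $\LexShad^{\ell_2-\ell_1}(A_1)$. In the paper this step is dismissed in one line: since $\min(A_1)=x_{k_2}x_{k_1+1}$, the lex shadow $\LexShad^{\ell_2-2}(A_1)=\mathcal L(x_1^{\ell_2},\,x_{k_2}x_{k_1+1}x_n^{\ell_2-2})$ already contains every monomial of $A(k_2,\ell_2)$ except the smallest one $x_{k_2+1}^{\ell_2}$, so $A_2\setminus\LexShad^{\ell_2-2}(A_1)=\{x_{k_2+1}^{\ell_2}\}$ and the claimed inequality reduces to $a_2\le 1$. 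Thus in the paper's treatment there is no combinatorial matching to perform; the whole step is a direct computation of a singleton. Your more elaborate outline (tracking the closure of the $a_1$ chosen elements, whose lex-minimum you identify with $\min(A_1)$) is not needed here and is also slightly off, since the $a_1$ generators in $G(I)_2$ are the $a_1$ lex-largest elements $x_1x_{k_1+1},\dots,x_{a_1}x_{k_1+1}$, not a set reaching down to $x_{k_2}x_{k_1+1}$.
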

\begin{proof}  (1) $\Leftrightarrow$ (2). Since $\textrm{char}(K) = 0$, then the generic initial ideal $\Gin(J) = I$ is a strongly stable ideal and the assertion follows from \cite[Corollary 1]{BCP}.\\
(2) $\Rightarrow$ (3). First of all note that $A_2\setminus \LexShad^{\ell_2-2}(A_1)\neq \emptyset$. In fact, $A_2\setminus \LexShad^{\ell_2-2}(A_1) = \{x_{k_2+1}^{\ell_2}\}$. Suppose $a_1> \vert A_1\vert$. Therefore, the greatest monomial  $u\in G(I)_2$ with $\m(u) = k_1+1$ following $x_{k_2}x_{k_1+1}$  is $u=x_{k_2+1}x_{k_1+1}$. Since $I$ is a strongly stable ideal, then $x_{k_2+1}^2\in G(I)_2$. Hence, the corner $(k_2,\ell_2)$ does not exist. A contradiction. Finally, it is easy to check that $a_2 \le \vert A_2\setminus \LexShad^{\ell_2-2}(A_1) \vert$. Indeed, $x_{k_2+1}^{\ell_2}$ is the smallest monomial belonging to $A(k_2, \ell_2)$. \\
%, whence $a_2=1$.\\
(3) $\Rightarrow$ (2). Denote by $m(a_i)$ the $a_i$-th monomial of $A_i\setminus \LexShad^{\ell_i-\ell_{i-1}}(A_{i-1})$,
$i=1, 2$. We construct a strongly stable ideal $I$ of $S$ in degrees $\ell_1=2 < \ell_2$, using the following \emph{criterion}: $G(I)_2 = \{v \in A(\leq k_1, 2): x_1^2 \ge_{\lex}v\geq_{\lex} m(a_1)\}$; $G(I)_{\ell_2} = \{z \in A(\leq k_2, \ell_2): m(a_1)x_n^{\ell_2-2} >_{\lex}z\geq_{\lex} m(a_2)\}$, where $m(a_1)x_n^{\ell_2-2}$ is the smallest monomial belonging to $\Shad^{\ell_2-2}(G(I)_2)$. 
\end{proof}

\begin{Proposition} \label{pro:seven} 
Given two positive integers $n,r$ ($n\ge 5$) such that $2< r \leq n-2$,  let $(k_1, \ell_1), (k_2, \ell_2),$\, $ \ldots, (k_r, \ell_r)$ be $r$ pairs of positive integers with $n-1 \ge k_1 >k_2 > \cdots >k_r\ge 2$ and 
$2 = \ell_1 < \ell_2 < \cdots < \ell_r$,
and let $a_1, a_2, \ldots, a_r$ be $r$ positive integers.

If $K$ is a field of characteristic $0$, then the following conditions are equivalent:
\begin{enumerate}
\item[\em (1)] there exists a graded ideal $J \subsetneq S$, with extremal Betti numbers
$\beta_{k_i, k_i+\ell_i}(J) = a_i$, for $i=1,\ldots, r$;
\item[\em (2)] there exists a strongly stable ideal $I \subsetneq S$, with extremal Betti numbers
$\beta_{k_i, k_i+\ell_i}(I) = a_i$, for $i=1,\ldots, r$;
\item[\em (3)] set $t = \max\{i:\ell_i \le r-i\}$; the integers $a_i$ satisfy the conditions:
\[1 \leq a_i\leq \vert A_i \setminus \LexShad^{\ell_i-\ell_{i-1}}(A_{i-1})\vert, \  \  \ \textrm{for} \  \  i=1,\ldots, r \]
where $A_0=\emptyset$ and
\begin{enumerate}
\item[\em (i)] $A_1 = \{u \in A(k_1, \ell_1) : u \geq_{\lex}x_{k_r-1}x_{k_1+1}\}$;
\item[\em (ii)] $A_i = \{u \in A(k_i, \ell_i) : u \geq_{\lex}x_{k_r}\cdots x_{k_{r-\ell_i+3}}x_{k_{r-\ell_i+2}-1}x_{k_i+1}\}$, for $i=2, \ldots, t$;
\item[\em (iii)] $A_i = \{u \in A(k_i, \ell_i) : u \geq_{\lex}x_{k_r}\cdots x_{k_{i+1}}x_{k_i+1}^{\ell_i-(r-i)} \}$, for $i=t+1
\ldots, r-1$;
\item[\em (iv)] $A_r = \{u \in A(k_r, \ell_r) : u \geq_{\lex}x_{k_r+1}^{\ell_r}\}$.
\end{enumerate}
\end{enumerate}
%In particular, $A_r\setminus \LexShad^{\ell_r-\ell_{r-1}}(A_{r-1}) = \{x_{k_r+1}^{\ell_r}\}$, i.e., $a_r =1$.
\end{Proposition}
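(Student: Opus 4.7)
The plan is to imitate, at a higher level of combinatorial bookkeeping, the two-corner argument given in Proposition~\ref{pro:twopairs}. The equivalence (1)~$\Leftrightarrow$~(2) is free: since $\textrm{char}(K)=0$, the generic initial ideal $\Gin(J)=I$ is strongly stable and \cite[Corollary~1]{BCP} preserves extremal Betti numbers with their positions, so the real content lies in the equivalence (2)~$\Leftrightarrow$~(3).

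For (2)~$\Rightarrow$~(3), I would fix a strongly stable $I\subsetneq S$ realizing the prescribed data and, for each $i$, let $u_i$ be the lex-smallest monomial of $G(I)_{\ell_i}$ with $\m(u_i)=k_i+1$. The lower bound $a_i\ge 1$ is immediate from the existence of the corner. For the upper bound, the core claim is that $u_i$ cannot be strictly below (in lex) the threshold monomial cut out in cases (i)--(iv). If it were, iterated application of strong stability would force a pure power $x_{k_j+1}^{2}$-type element to belong to $G(I)$ in a lower degree, which, by the same argument used in Proposition~\ref{pro:twopairs}, destroys the existence of the corner $(k_j,\ell_j)$ via Characterization~\ref{char:CU}. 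The four subcases reflect how many indices the ``descent'' can consume before repeating: case (ii) ($i\le t$, i.e.\ $\ell_i\le r-i$) is when the decreasing indices $k_{i+1},\ldots,k_r$ are numerous enough to absorb a squarefree descent of length $\ell_i-1$, so the threshold is a chain through those indices; case (iii) ($i>t$) is when the descent exhausts those indices and is forced to repeat $x_{k_i+1}$ to fill the remaining degree; cases (i) and (iv) are the boundary instances at $i=1$ and $i=r$. The upper bound $a_i\le|A_i\setminus\LexShad^{\ell_i-\ell_{i-1}}(A_{i-1})|$ then follows because a generator in degree $\ell_i$ with $\m=k_i+1$ lying inside $\LexShad^{\ell_i-\ell_{i-1}}(A_{i-1})$ would be divisible by a proper generator from degree $\ell_{i-1}$, contradicting minimality.

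For (3)~$\Rightarrow$~(2), I would construct $I$ by concatenating lex segments inside the sets $A(\le k_i,\ell_i)$, mimicking the construction of Proposition~\ref{pro:twopairs}. Letting $m(a_i)$ be the $a_i$-th monomial of $A_i\setminus\LexShad^{\ell_i-\ell_{i-1}}(A_{i-1})$, I would set
\begin{equation*}
G(I)_{\ell_1}=\{v\in A(\le k_1,\ell_1):\ x_1^{\ell_1}\ge_{\lex}v\ge_{\lex}m(a_1)\}
\end{equation*}
and, for $i\ge 2$,
\begin{equation*}
G(I)_{\ell_i}=\{z\in A(\le k_i,\ell_i):\ m(a_{i-1})x_n^{\ell_i-\ell_{i-1}}>_{\lex}z\ge_{\lex}m(a_i)\}.
\end{equation*}
Each piece is a lex segment inside $A(\le k_i,\ell_i)$ whose lower endpoint $m(a_i)$ belongs to $A_i$, so the resulting $I$ is strongly stable. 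By construction the maximum of $\m$ on $G(I)_{\ell_i}$ is $k_i+1$ and all generators in strictly higher degrees have $\m\le k_{i+1}+1\le k_i$, so Characterization~\ref{char:CU} identifies the corners as $(k_i,\ell_i)$ and the counts match by choice of $m(a_i)$. The subtraction of $\LexShad^{\ell_i-\ell_{i-1}}(A_{i-1})$ ensures that consecutive segments do not collide: its minimum $m(a_{i-1})x_n^{\ell_i-\ell_{i-1}}$ is precisely the upper cutoff of the degree-$\ell_i$ block.

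The main obstacle is the ``descent'' argument in (2)~$\Rightarrow$~(3). The threshold monomials listed in (ii) and (iii) are the endpoints produced by pushing $u_i$ downward via strong stability as far as possible without running into any index $k_j$ with $j>i$; translating this picture into a clean, case-by-case comparison in lex order requires a careful induction on the remaining number of corners, together with a separate treatment of the transition at $i=t$, where the descent first fails to fit inside the decreasing sequence of $k$-values. Once that technical step is in place, the shadow bookkeeping and the explicit construction in the reverse direction are straightforward extensions of the two-corner situation.
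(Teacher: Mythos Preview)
Your outline follows the paper's approach closely: the equivalence (1)$\Leftrightarrow$(2) via $\Gin$, the explicit lex-segment construction for (3)$\Rightarrow$(2), and the ``descent'' strategy for (2)$\Rightarrow$(3) are all what the paper does, and your (3)$\Rightarrow$(2) construction is verbatim the one given there.

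There is, however, a genuine gap in your (2)$\Rightarrow$(3) argument. You split the upper bound into two pieces: (a) no generator with $\m=k_i+1$ in degree $\ell_i$ can lie strictly below $\min(A_i)$, and (b) no such generator can lie in $\LexShad^{\ell_i-\ell_{i-1}}(A_{i-1})$, the latter ``because a generator \ldots\ lying inside $\LexShad^{\ell_i-\ell_{i-1}}(A_{i-1})$ would be divisible by a proper generator from degree $\ell_{i-1}$.'' Claim (b) is not justified. The set $A_{i-1}$ is an abstract combinatorial threshold, not $G(I)_{\ell_{i-1}}$, and $\LexShad$ is a lex segment, not the actual shadow $\Shad$. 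For an arbitrary strongly stable $I$ realizing the corner data there is no reason a monomial $u$ with $u\ge_{\lex}\min(A_{i-1})x_n^{\ell_i-\ell_{i-1}}$ must be a multiple of something in $I_{\ell_{i-1}}$; strong stability only gives closure under Borel moves, not under lex comparison. So the divisibility/minimality contradiction you invoke does not follow.

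The paper does not argue this way. It does not try to locate the individual generators inside $A_i\setminus\LexShad^{\ell_i-\ell_{i-1}}(A_{i-1})$; instead it argues directly on the \emph{count}. Assuming $a_i$ exceeds the stated cardinality, it identifies the monomial immediately below $\min\bigl(A_i\setminus\LexShad^{\ell_i-\ell_{i-1}}(A_{i-1})\bigr)$ in $A(k_i,\ell_i)$ explicitly (e.g.\ $x_{k_r}\cdots x_{k_{i+2}}x_{k_{i+1}+1}^{\ell_i-(r-i)}x_{k_i+1}$ in Case~1) and shows this monomial is forced into $G(I)_{\ell_i}$; it then propagates this, via the arguments of \cite[Theorem~3.1]{CU2}, through the subsequent degrees until reaching a monomial $x_{k_r+1}^{\ell_{r-1}}$ (or the analogue at level $t+1$) whose presence kills the corner $(k_r,\ell_r)$. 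The subtraction of $\LexShad$ in the bound thus arises not from a minimality obstruction at level $i$ but from the explicit description of the single surviving monomial at each intermediate step of this propagation. Your sketch would need to replace the divisibility sentence by this forcing-and-propagation argument, including the separate handling of the cases $T=\emptyset$ and $T\neq\emptyset$ that you allude to only in your final paragraph.
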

\begin{proof} 
(1) $\Leftrightarrow$ (2). See Proposition \ref{pro:twopairs}, (1) $\Leftrightarrow$ (2).\\
(2) $\Rightarrow$ (3). At first, one can observe that $A_i\setminus \LexShad^{\ell_i-\ell_{i-1}}(A_{i-1})\neq \emptyset$, for all $i=1,\ldots, r$.
Indeed, if $\ell_2 \le r-2$, then $A_2\setminus \LexShad^{\ell_2-2}(A_1) =$ $\{u \in A(k_2, \ell_2) : x_{k_r}^{\ell_2-1}x_{k_2+1} \ge_\lex u \ge_\lex (\prod_{j=r}^{r-\ell_2+3}x_{k_j})x_{k_{r-\ell_2+2}-1}x_{k_2+1}\}$; if  $\ell_2 > r-2$, 
$A_2\setminus \LexShad^{\ell_2-2}(A_1) = \{u \in A(k_2, \ell_2) : x_{k_r}^{\ell_2-1}x_{k_2+1} \ge_\lex u
\ge_\lex (\prod_{j=r}^{3}x_{k_j})x_{k_2+1}^{\ell_2-(r-2)}\}$, where $x_{k_r}^{\ell_2-1}x_{k_2+1}$ is the greatest monomial of $A(k_2, \ell_2)$ that does not belong to $\LexShad^{\ell_2-2}(A_1)$.\\
Moreover,
$A_i\setminus \LexShad^{\ell_i-\ell_{i-1}}(A_{i-1}) = \{(\prod_{j=r}^{r-\ell_i+3}x_{k_j})x_{k_{r-\ell_i+2}-1}x_{k_i+1}\}$, for $i=3, \ldots, t$, and $A_i\setminus \LexShad^{\ell_i-\ell_{i-1}}(A_{i-1}) = \{(\prod_{j=r}^{i+1}x_{k_j})x_{k_i+1}^{\ell_i-(r-i)}\}$,
for $i= t+1, \ldots, r-1$. Finally, $A_r\setminus \LexShad^{\ell_r-\ell_{r-1}}(A_{r-1}) =\{x_{k_r+1}^{\ell_r}\}$.\\
Suppose $a_1> \vert A_1\vert$. The greatest monomial  $u\in G(I)_2$ with $\m(u) = k_1+1$ following $x_{k_r-1}x_{k_1+1}$  is $u=x_{k_r}x_{k_1+1}$, and consequently, since $I$ is a strongly stable ideal, then $x_{k_r+1}^{\ell_2}\in G(I)_{\ell_2}$. Hence, the corner $(k_r,\ell_r)$ does not exist. A contradiction. \\
Now, set $T=\{\ell_i \,:\, \ell_i \le r-i\}$, for $i\in \{2, \ldots, r\}$. We distinguish two cases: $T = \emptyset$ and  $T \neq \emptyset$.\\
(Case 1). Assume $T = \emptyset$. Let $ 2 \le i < r$ be the smallest integer such that $a_i > \vert A_i\setminus \LexShad^{\ell_i-\ell_{i-1}}(A_{i-1})\vert$.  Then $G(I)_{\ell_i}$ contains the monomial $x_{k_r}x_{k_{r-1}} \cdots x_{k_{i+2}} x_{k_{i+1}+1}^{\ell_i-(r-i)}x_{k_i+1}$, which is the greatest monomial of $A(k_i, \ell_i)$ following $x_{k_r}\cdots x_{k_{i+1}}x_{k_i+1}^{\ell_i-(r-i)} = \min(A_i \setminus \LexShad^{\ell_i-\ell_{i-1}}(A_{i-1}))$. Since $\ell_2\ge 3$, using the same arguments as in \cite[Theorem 3.1]{CU2}, one has that $G(I)_{\ell_{r-1}}$ contains the monomial $x_{k_r+1}^{\ell_{r-1}}$. Hence, $(k_r, \ell_r)$ is not a corner of $I$. A contradiction. \\
(Case 2). Assume $T \neq \emptyset$. Let $2\le i \leq t$ be the smallest integer such that $a_i > \vert
A_i \setminus \LexShad^{\ell_i-\ell_{i-1}}(A_{i-1})\vert$. Then
$G(I)_{\ell_i}$ contains the monomial
$x_{k_r} \dots x_{k_{r-\ell_i+3}} x_{k_{r-\ell_i+2}} x_{k_i+1}$, which is the greatest monomial of $A(k_i, \ell_i)$ following
$x_{k_r}\cdots x_{k_{r-\ell_i+3}}x_{k_{r-\ell_i+2}-1}x_{k_i+1}=\min (A_i \setminus \LexShad^{\ell_i-\ell_{i-1}}(A_{i-1}))$. Therefore, using again the same arguments as in \cite[Theorem 3.1]{CU2}, one can verify that the monomial  $x_{k_r} \dots x_{k_{r-\ell_i+3}}  x_{k_{r-\ell_i+2}+1}^{\ell_{t+1}-(\ell_i-1)} x_{k_{t+1}+1}$ belongs to $G(I)_{\ell_{t+1}}$, and since $x_{k_r} \dots x_{k_{r-\ell_i+3}}  x_{k_{r-\ell_i+2}+1}^{\ell_{t+1}-(\ell_i-1)} x_{k_{t+1}+1}  <_{\lex}  x_{k_r} \dots x_{k_{t+2}} x_{k_{t+1}}^{\ell_{t+1}-(r-t-1)}$, a contradiction follows.\\
(3) $\Rightarrow$ (2) The proof is quite similar to Proposition \ref{pro:twopairs}. Denote by $m(a_i)$ the $a_i$-th monomial of $A_i \setminus \LexShad^{\ell_i-\ell_{i-1}}(A_{i-1})$, $1 \le i\le r$. We construct a strongly stable ideal $I$ of $S$ in degrees $\ell_1=2 < \ell_2 < \cdots <\ell_r$, using the following \emph{criterion}: $G(I)_2 = \{v \in A(\leq k_1, \ell_1=2): x_1^2 \ge_{\lex}v\geq_{\lex} m(a_1)\}$;
$G(I)_{\ell_i} = \{z \in A(\leq k_i, \ell_i): m(a_{i-1})x_n^{\ell_i-\ell_{i-1}} >_{\lex}z\geq_{\lex} m(a_i)\}$, where
 $m(a_{i-1})x_n^{\ell_i-\ell_{i-1}}$ is the smallest monomial belonging to $\Shad^{\ell_i-\ell_{i-1}}(\Mon(I_{\ell_{i-1}}))$, for $i=2, \ldots, r$, where $\Mon(I_{\ell_{i-1}})$ is the set of the monomials of degree $\ell_{i-1}$ belonging to $I_{\ell_{i-1}}$.
\end{proof}
\begin{Remark} \label{value3} \em 
A strongly stable ideal  $I \subsetneq S$ with $\alpha(I)\ge 3$ can have at most $n-1$ extremal Betti numbers; the possible extremal Betti numbers of such an ideal (positions as well as values) have been characterized in 
\cite[Theorem 3.1]{CU2}. 
\end{Remark}

Next characterization covers both the case considered in \cite[Theorem 3.1]{CU2} and the one in Proposition \ref{pro:seven}.

\begin{Theorem} \label{pro:general} Given two positive integers $n, r$ such that $1 \le r \leq n-1$, let $(k_1, \ell_1), (k_2, \ell_2),$ $ \ldots, (k_r, \ell_r)$ be $r$ pairs of positive integers with $n-1 \ge k_1 >k_2 > \cdots >k_r\ge 1$ and 
$2 \le \ell_1 < \ell_2 < \cdots < \ell_r$,
and let $a_1, a_2, \ldots, a_r$ be $r$ positive integers.

If $K$ is a field of characteristic $0$, then the following conditions are equivalent:
\begin{enumerate}
\item[\em (1)] there exists a graded ideal $J \subsetneq S$, with extremal Betti numbers
$\beta_{k_i, k_i+\ell_i}(J) = a_i$, for $i=1,\ldots, r$;
\item[\em (2)] there exists a strongly stable ideal $I \subsetneq S$, with extremal Betti numbers
$\beta_{k_i, k_i+\ell_i}(I) = a_i$, for $i=1,\ldots, r$;
\item[\em (3)] set $t = \max\{i:\ell_i \le r-i\}$; the integers $a_i$ satisfy the conditions:
\[1 \leq a_i\leq \vert A_i \setminus \LexShad^{\ell_i-\ell_{i-1}}(A_{i-1})\vert, \  \  \ \textrm{for} \  \  i=1,\ldots, r, \]
where $A_0=\emptyset$ and
\begin{enumerate}
\item[\em (i)] $A_1 = \{u \in A(k_1, \ell_1) : u \geq_{\lex}x_{k_r-1}x_{k_1+1}\}$, whenever $\ell_1=2$;
\item[\em (ii)] $A_i = \{u \in A(k_i, \ell_i) : u \geq_{\lex}x_{k_r}\cdots x_{k_{r-\ell_i+3}}x_{k_{r-\ell_i+2}-1}x_{k_i+1}\}$, for $i=1,\ldots, t$, whenever $\ell_1\ge 3$, and for $i=2, \ldots, t$, whenever $\ell_1=2$;
\item[\em (iii)] $A_i = \{u \in A(k_i, \ell_i) : u \geq_{\lex}x_{k_r}\cdots x_{k_{i+1}}x_{k_i+1}^{\ell_i-(r-i)} \}$, for $i=t+1,\ldots, r-1$;
\item[\em (iv)] $A_r = \{u \in A(k_r, \ell_r) : u \geq_{\lex}x_{k_r+1}^{\ell_r}\}$, 
\end{enumerate}
\end{enumerate}
with $2< r\le n-2$ (it has to be $n\ge 5$), $k_r\ge 2$, whenever $\ell_1=2$, and $1\le r\le n-1$, $k_r\ge 1$, whenever $\ell_1\ge 3$.
\end{Theorem}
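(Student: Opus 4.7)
The plan is to deduce Theorem \ref{pro:general} as a common unification of \cite[Theorem 3.1]{CU2} (which handles the case $\ell_1 \geq 3$) and Proposition \ref{pro:seven} (which handles the case $\ell_1 = 2$); the piecewise description of the sets $A_i$ in items (i)--(iv) is designed precisely to glue the two prior statements together. The equivalence (1) $\Leftrightarrow$ (2) is immediate: since $\mathrm{char}(K) = 0$, for any graded ideal $J \subsetneq S$ the generic initial ideal $I = \Gin(J)$ taken with respect to the graded reverse lexicographic order is strongly stable, and by \cite[Corollary 1]{BCP} the positions and values of the extremal Betti numbers are preserved in passing from $J$ to $I$. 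Conversely, any strongly stable ideal already furnishes a witness for (1). This is the same reduction used in Propositions \ref{pro:twopairs} and \ref{pro:seven}.

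For the equivalence (2) $\Leftrightarrow$ (3), I would split on the value of $\ell_1$. If $\ell_1 \geq 3$, the parameter conditions reduce to $1 \leq r \leq n-1$ and $k_r \geq 1$, and the sets $A_i$ are defined by item (ii) for $i = 1, \ldots, t$ (item (i) is not invoked in this subcase, as its scope is explicitly restricted to $\ell_1 = 2$), by item (iii) for $i = t+1, \ldots, r-1$, and by item (iv) for $i = r$. These coincide verbatim with the sets employed in \cite[Theorem 3.1]{CU2}, so this subcase of (2) $\Leftrightarrow$ (3) is precisely the content of that theorem. If $\ell_1 = 2$, the constraints become $2 < r \leq n-2$ (so $n \geq 5$) and $k_r \geq 2$; the set $A_1$ is defined via (i), and the sets $A_i$ for $i \geq 2$ via (ii), (iii), (iv). This is exactly the statement of Proposition \ref{pro:seven}.

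The main obstacle I foresee is the bookkeeping required to verify the seamless compatibility between the two subcases and their source theorems, especially the boundary behaviour when $\ell_t = r - t$ exactly, where the threshold monomial prescribed by (ii) at $i = t$ and by (iii) at $i = t+1$ must be checked to agree with the constructions in each prior theorem; also the exclusion of the pair $(r,\ell_1) = (2,2)$, which is separately covered by Proposition \ref{pro:twopairs} rather than by Theorem \ref{pro:general}. Once this is confirmed, no new construction is needed: the strongly stable ideal $I$ realizing the prescribed data $a_1, \ldots, a_r$ is built by the common lexicographic-segment criterion, namely taking $G(I)_{\ell_i}$ to be the set of monomials of $A(\leq k_i, \ell_i)$ lying lexicographically strictly below the smallest monomial of $\Shad^{\ell_i - \ell_{i-1}}(G(I)_{\ell_{i-1}})$ and at least as large as the $a_i$-th monomial of $A_i \setminus \LexShad^{\ell_i - \ell_{i-1}}(A_{i-1})$, exactly as in Propositions \ref{pro:twopairs} and \ref{pro:seven}; Characterization \ref{char:CU} then confirms that the prescribed corners are indeed the extremal Betti numbers.
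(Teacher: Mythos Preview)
Your proposal is correct and matches the paper's own treatment: the paper gives no separate proof of Theorem \ref{pro:general}, presenting it explicitly as the statement that ``covers both the case considered in \cite[Theorem 3.1]{CU2} and the one in Proposition \ref{pro:seven}'', which is exactly the case split on $\ell_1 \ge 3$ versus $\ell_1 = 2$ that you carry out. Your observation that the case $(r,\ell_1)=(2,2)$ is excluded from Theorem \ref{pro:general} and handled separately by Proposition \ref{pro:twopairs} is also in line with the paper's organization.
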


\section{Possible corners}\label{sec3}
In this Section, we determine the possible corners (values and positions) of a strongly stable submodule of the finitely generated graded free $S$-module $S^m$, $m \ge 1$, with standard basis $e_1, \ldots, e_m$, where $e_i$ is the $m$-tuple whose only non--zero entry is a $1$ in the $i$-th position, and such that $\deg(e_i)= 0$, for all $i$.

Let us introduce some notations.
For a finitely generated graded $S$-module $M$, we denote by $\C(M)$ the set of all its corners, \emph{i.e.},
\[
\C(M) = \{(k, \ell) \in \NN \times \NN:  \beta_{k, k+\ell}(M)\, \,  \textrm{is an extremal Betti number of}\,\, M\}.
\]

If $M=\oplus_{j=1}^mI_je_j$ is a monomial submodule of the finitely generated graded free
$S$-module $F = \oplus_{i=1}^m Se_i$, with basis $e_1, \dots, e_m$ where $\deg(e_i) = f_i$, for  each $i=1, \ldots, m$, with $f_1\le \cdots \le f_m$, we denote by $\C_M(I_je_j)$ the set of corners of $I_je_j$ that are also corners of $M$, \emph{i.e.}, $\C_M(I_je_j) = \C(I_je_j) \cap \C(M)$.

Morever, if $\mathcal{D}(M)$ is the set of the ideals appearing in the direct decomposition of $M$, we define the following set of ideals of $S$:
\[\C(\mathcal{D}(M)) = \{I_j \in  \mathcal{D}(M) \,:\, \C_M(I_je_j) \neq \emptyset, \,\, \textrm{for}\,\, j=1,\ldots, m\};\]
we call each ideal of $\C(\mathcal{D}(M))$ a \emph{corner ideal} of $M$.
\begin{Definition} \label{def:corn} Let $(k_1, \ldots, k_r)$ and $(\ell_1, \ldots, \ell_r)$ be two sequences of positive integers such that $n-1\ge k_1> k_2>\cdots >k_r \ge 1$ and $1\le \ell_1 < \ell_2 < \cdots < \ell_r$. The set $\mathcal{C} = \{(k_1, \ell_1), \ldots,(k_r, \ell_r)\}$ is called a corner sequence and $\ell_1, \ldots, \ell_r$ are called the \emph{corner degrees} of $\mathcal{C}$.
\end{Definition}

\begin{Remark} \em Definition \ref{def:corn} is motivated by the fact that for every finitely generated graded $S$-module $M$, $\C(M)$ defines a corner sequence.
\end{Remark}

\begin{Definition}
A totally ordered corner sequence 
$
\mathcal{C} = \{(k_1, \ell_1), \ldots, (k_r, \ell_r)\}
$ 
is a corner sequence such that
$(k_1, \ell_1)\succ (k_2, \ell_2)\succ  \cdots \succ (k_r, \ell_r)$
where
\[
(k_i, \ell_i)\succ (k_j, \ell_j)\quad  \hbox{if and only if}\quad
k_i\ge k_j \quad \hbox{and} \quad \ell_i \le \ell_j;
\]
we refer to $(k_i, \ell_i)$ as the $i$-th element of the ordered corner sequence. 
\end{Definition}

Let $M=\oplus_{j=1}^m I_je_j$ be a monomial submodule of $F$. One can observe that if $(k,\ell)\in \C(M)$ and $\beta_{k, k+\ell-f_j}(I_j)$ $\neq 0$, then $(k,\ell)\in \C_M(I_je_j)$. Hence, we define the following $m$-tuple of non--negative integers:
\[C_{(k, \ell)} = (\beta_{k, k+\ell-f_1}(I_1), \ldots, \beta_{k, k+\ell-f_m}(I_m));\]
we call such a sequence the $(k, \ell)$-\emph{sequence} of the module $M$.  
It is clear that $\beta_{k, k+\ell}(M) = \sum_{j=1}^m\beta_{k, k+\ell-f_j}(I_j)$.

Hence, if $\C(M)= \{(k_1, \ell_1), \ldots, (k_r, \ell_r)\}$ is a totally ordered corner sequence, 
one can associate to the module $M$ an $r \times m$ matrix whose $i$-th row is the $(k_i, \ell_i)$-sequence of $M$, $1\le i \le r$. We call such a matrix the \emph{corner matrix} of $M$, and we denote it by $C_M$.  The sum of the entries of the 
$i$--th row of $C_M$ equals the value of the extremal Betti number $\beta_{k_i, k_i+\ell_i}(M)$, for $i=1,\ldots, r$.\\

\begin{Example}  \em Let $S=K[x_1, \ldots, x_6]$. Consider the strongly stable submodule $M = I_1e_1\oplus I_2e_2 \oplus I_3e_3 \oplus I_4e_4\subsetneq S^3$,
where $I_1= (x_1^2, x_1x_2,x_1x_3,x_1x_4,x_1x_5, x_1x_6, x_2^3, x_2^2x_3, x_2^2x_4,x_2x_3^4)$, 
$I_2 = (x_1^2, x_1x_2,x_1x_3)$, $I_3=(x_1^3, x_1^2x_2,x_1^2x_3, x_1^2x_4)$, $I_4 = (x_1^2, x_1x_2,x_1x_3,x_1x_4,x_1x_5,  x_2^3, x_2^2x_3, $\newline $x_2^2x_4,x_2x_3^2, x_2x_3x_4, x_2x_4^2)$. 
By using the computer program CoCoA \cite{CNR}(see also \cite{GDS}), one has that $\C(M)= \{(5,2), (3,3), (2,5)\}$, 
$\C(I_1e_1)=\C_M(I_1e_1)=\C(M)$, $\C(I_2e_2)= \{(2,2)\}$, $\C_M(I_2e_2)= \emptyset$, $\C(I_3e_3)=\C_M(I_3e_3)=\{(3, 3)\}$, $\C(I_4e_4)$\qquad $= \{(4,2), (3,3)\}$, $\C_M(I_4e_4)=\{(3,3)\}$. Moreover, $\beta_{5, 5+2}(I_1e_1)=\beta_{3, 3+3}(I_1e_1)=\beta_{2, 2+5}(I_1e_1)=1$,  $\beta_{3, 3+3}(I_3e_3)=1$,  $\beta_{3, 3+3}(I_4e_4)=3$ and $\beta_{5, 5+2}(M)=1$, $\beta_{3, 3+3}(M)=5$, $\beta_{2, 2+5}(M)=1$.

Hence, $C_{(5, 2)} =(1,0,0,0)$, $C_{(3, 3)} = (1,0,1,3)$, $C_{(2, 5)} = (1,0,0,0)$,
and 
\[C_M= \left(
\begin{array}{llll}
1 & 0 & 0 & 0\\
1 & 0 & 1 & 3\\
1 & 0 & 0 & 0\\
\end{array}
\right).
\]

Note that the sum of the entries of each row of $C_M$ gives the value of the extremal Betti numbers $\beta_{5, 5+2}(M)$, $\beta_{3, 3+3}(M)$ and $\beta_{2, 2+5}(M)$, respectively. Furthermore, $\C(\mathcal{D}(M)) = \{I_1, I_3, I_4\}$.
\end{Example}

From now on, we deal with submodules of the finitely generated graded free $S$--module $S^m$, $m \ge1$, with standard basis $e_1, \ldots, e_m$, where $\deg(e_i) =0$, for all $i$. Moreover, in order to simplify the notations, if $M=\oplus_{j=1}^m I_je_j$ is a monomial submodule of $S^m$, we write $\C_M(I_je_j) = \C_M(I_j)$ and $\C(I_je_j) = \C(I_j)$.

\begin{Lemma} \label{lem:sub} Let $M=\oplus_{j=1}^m I_je_j$ be a strongly stable submodule of $S^m$, $m\ge 1$, with extremal Betti numbers $\beta_{k_1, k_1+\ell_1}(M) = a_1$, $\ldots$, $\beta_{k_r, k_r+\ell_r}(M) = a_r$,  $1\le r\le n-1$. Then there exists a strongly stable submodule $\widetilde M$ of $S^m$ such that
\begin{enumerate}
\item[\em(i)] $\C(\widetilde M) = \C(M)$;
\item[\em(ii)] $\beta_{k_i, k_i+\ell_i}(\widetilde M) = a_i$, for $i=1, \ldots, r$;
\item[\em(iii)] $\C(I) = \C_{\widetilde M}(I)$,  for any $I \in \C(\mathcal{D}(\widetilde M))$.
\end{enumerate}
\end{Lemma}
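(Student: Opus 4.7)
The plan is to argue by induction on
\[ T(M) \;:=\; \sum_{j=1}^{m}\ \sum_{(k,\ell)\in \C(I_j)\setminus \C(M)} (k+1), \]
which totals, with multiplicity, the corners of the summand ideals that are not corners of $M$. If $T(M)=0$, then $\C(I_j)\subseteq \C(M)$ for every $j$, so $\C_M(I_j)=\C(I_j)$, and $\widetilde M := M$ already satisfies (i), (ii), and (iii). Otherwise I fix some $(k,\ell)\in \C(I_j)\setminus \C(M)$, set $U:=\{u\in G(I_j)_\ell : \m(u)=k+1\}$ (nonempty by Characterization \ref{char:CU} applied to the ideal $I_j$), let $I_j'$ be the monomial ideal generated by $G(I_j)\setminus U$, and form $M'$ by replacing $I_j$ with $I_j'$ in $M$. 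The plan is then to prove three claims: (a) $I_j'$ is strongly stable, so $M'$ is a strongly stable submodule of $S^m$; (b) $\C(M')=\C(M)$ and $\beta_{k_i,k_i+\ell_i}(M')=\beta_{k_i,k_i+\ell_i}(M)$ for every corner of $M$; and (c) $T(M')<T(M)$.

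For (a), writing any $v\in I_j'$ as $v=wg$ with $g\in G(I_j)\setminus U$ and $w$ a monomial, I would check that for each $x_i\mid v$ and $j'<i$, the Borel move $(x_{j'}/x_i)v$ is divisible by some element of $G(I_j)\setminus U$. The delicate case occurs when $x_i\mid g$ with $\deg g=\ell$: if the only element of $G(I_j)$ dividing $(x_{j'}/x_i)g$ were in $U$, one would deduce either $\m(g)=k+1$ (forcing $g\in U$, contradiction) or $\m(g)>k+1$ (contradicting the first clause of Characterization \ref{char:CU}). The case $\deg g>\ell$ uses the second clause $\m(u)\leq k$ to bound the Borel image. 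For (b), if $M$ has a corner $(k_{i_0},\ell)$ in degree $\ell$, then $\beta_{k,k+\ell}(M)\geq \beta_{k,k+\ell}(I_j)>0$ together with extremality of $(k_{i_0},\ell)$ forces $k<k_{i_0}$, so $\binom{k}{k_{i_0}}=0$ and by (\ref{eq:EK}) removing $U$ leaves $\beta_{k_{i_0},k_{i_0}+\ell}(M)$ unchanged; corners of $M$ in other degrees are untouched by the modification. The same binomial vanishing shows that any position upstairs from a corner of $M$ whose Betti number was already zero remains zero. Conversely, a hypothetical new corner $(k^*,\ell^*)$ of $M'$ outside $\C(M)$ would have $\beta_{k^*,k^*+\ell^*}(M)\neq 0$ and hence be dominated by some corner of $M$ whose value survived, contradicting extremality in $M'$.

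For (c), the maximum of $\m$ on $G(I_j')_\ell = G(I_j)_\ell\setminus U$ is at most $k$, so $(k,\ell)\notin \C(I_j')$; moreover, at most one new corner $(k',\ell)$ with $k'<k$ can appear in $\C(I_j')$ and, since any corner of $M$ in degree $\ell$ has first coordinate strictly larger than $k$, this new corner also lies outside $\C(M)$. Hence the degree-$\ell$ contribution of $I_j$ to $T$ strictly decreases, and the induction hypothesis applied to $M'$ produces the desired $\widetilde M$. The main obstacle will be the case analysis in (a): one must exploit both parts of Characterization \ref{char:CU} simultaneously to ensure that the Borel move of an element of $G(I_j)\setminus U$ never has all its divisors in $U$.
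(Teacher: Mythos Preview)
Your verification of (a) and (b) is sound; the problem is in (c). When you delete $U=\{u\in G(I_j)_\ell:\m(u)=k+1\}$, it is \emph{not} true that the only possible new corner of $I_j'$ sits in degree~$\ell$. A position $(p,q)$ with $q<\ell$ that failed to be a corner of $I_j$ \emph{only} because of the degree-$\ell$ generators in $U$ can become a corner of $I_j'$ once those generators are gone. Such new corners (which need not lie in $\C(M)$) can push $T(M')$ above $T(M)$, so the induction breaks.

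Concretely, take $S=K[x_1,\ldots,x_7]$ and
\[
I_j=\bigl(x_1^2,\,x_1x_2,\ldots,x_1x_7,\ x_2^3,\,x_2^2x_3,\,x_2^2x_4,\ x_2x_3^4,\,x_2x_3^3x_4\bigr).
\]
One checks $I_j$ is strongly stable, and $\max\m$ on $G(I_j)_2,G(I_j)_3,G(I_j)_5$ equals $7,4,4$, so $\C(I_j)=\{(6,2),(3,5)\}$; the pair $(3,3)$ is \emph{not} a corner of $I_j$ because $\m(x_2x_3^3x_4)=4>3$ in degree~$5$. Now pair $I_j$ with a summand giving $M$ a corner dominating $(3,5)$: take $I_2=(x_1^6,x_1^5x_2,\ldots,x_1^5x_6)$, so $\C(I_2)=\{(5,6)\}$, and set $M=I_je_1\oplus I_2e_2$. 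Then $\C(M)=\{(6,2),(5,6)\}$, and $\C(I_j)\setminus\C(M)=\{(3,5)\}$ contributes $3+1=4$ to $T(M)$. Deleting $U=\{x_2x_3^3x_4\}$ drops $\max\m$ on $G(I_j')_5$ to~$3$; now $(3,3)$ \emph{is} a corner, and one finds $\C(I_j')=\{(6,2),(3,3),(2,5)\}$. Since neither $(3,3)$ nor $(2,5)$ lies in $\C(M')=\C(M)$, the contribution of $I_j'$ to $T(M')$ is $(3+1)+(2+1)=7>4$. The measure has gone up, and no simple reweighting (counting corners, summing $\ell$'s, etc.) repairs this, since both the number of extraneous corners and their degrees increased.

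The paper sidesteps this entirely by not pruning corners one at a time. For each column $h$ of the corner matrix $C_M$ it reads off the sub-sequence $\mathcal{C}_h\subseteq\C(M)$ and the values $a_{i,h}$ that $I_h$ contributes, and then rebuilds $\widetilde I_h$ \emph{from scratch} via the explicit lex-segment constructions of Propositions~\ref{pro:twopairs} and~\ref{pro:seven} (and \cite[Theorem~3.1]{CU2}), producing directly an ideal with $\C(\widetilde I_h)=\mathcal{C}_h$ and the prescribed extremal values. No local surgery on $G(I_h)$ is attempted, so the bookkeeping issue above never arises.
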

\begin{proof} Let $(k_i, \ell_i)$ be a corner of $M$ and let $C_{(k_i, \ell_i)} =
(\beta_{k_i, k_i +\ell_i}(I_1), \ldots, \beta_{k_i, k_i+\ell_i}(I_m))$ be the $(k_i, \ell_i)$-sequence of $M$, for $i=1, \ldots, r$. Setting $\beta_{k_i, k_i +\ell_i}(I_j)=a_{i,j}$, $j=1,\ldots,m$, one has $a_i= \sum_{j=1}^ma_{i,j}$, for $i=1, \ldots, r$.  Consider the corner matrix of $M$:
\begin{equation} 
\label{matrix}
C_M = \left( \begin{array}{cccc}
a_{1,1} & a_{1,2} & \cdots  & a_{1,m} \\
a_{2,1} & a_{2,2} & \cdots  & a_{2,m} \\
\cdots & \cdots & \cdots  & \cdots \\
a_{r,1} & a_{r,2} & \cdots & a_{r,m} 
\end{array} \right).
\end{equation} 
Setting 
\[\mathcal{C}_h = \{(k_i, \ell_i) \,:\, a_{i,h}\neq 0, \,\, \mbox{for $i=1, \ldots, r$}\}\, (1 \le h \le m),\]
let $I_h\in \C(\mathcal{D}(M))$ be the strongly stable ideal of $S$ such that $\C_M(I_h) = C_h$. One has that   
$\beta_{k_i, k_i +\ell_i}(I_h)=a_{i,h}$, for $i=1, \ldots, r$.\\
Now we prove that there exists a strongly stable ideal $\widetilde I_h$ ($1\le h \le m$) of $S$ such that $\C(\widetilde I_h) = C_h$ and $\beta_{k_i, k_i +\ell_i}(\widetilde I_h)=a_{i,h}$, for $i=1, \ldots, r$.
\par\noindent
If $\C(I_h)=\C_M(I_h)$, then $I_h$ is the ideal we are looking for. Assume $C_h \subsetneq \C(I_h)$. Thus,
$C_h$ is not a maximal corner sequence of $I_h$, and there exists at least one pair $(k, \ell)$ of positive integers which is a corner of $I_h$, but not of $M$. Hence, because of $\vert C_h\vert < \vert \C(I_h)\vert$, we are able to construct a strongly stable ideal $\widetilde I_h$ of $S$ generated in the corner degrees of $C_h$, with $\C(\widetilde I_h)=C_h$ and such that $\beta_{k_i, k_i +\ell_i}(\widetilde I_h)=a_{i,h}$, using the same criterion in Propositions \ref{pro:twopairs}, \ref{pro:seven} (see also \cite[Theorem 3.1]{CU2}).

Finally, the strongly stable submodule $\widetilde M$ of $S^m$ obtained from $M$, replacing every ideal $I_h\in \C(\mathcal{D}(M))$  with the ideal $\widetilde I_h$, and leaving unchanged the ideals $I_j\in \mathcal{D}(M)\setminus  \C(\mathcal{D}(M))$, is the desired strongly stable submodule of $S^m$.
\end{proof}

Now we are in position to give the main result of the paper.

\begin{Theorem} \label{thm:possible} Given two positive integers $n, r$ such that $1\leq r \le n-1$, $r$ pairs of positive integers $(k_1, \ell_1),\ldots, (k_r, \ell_r)$ with $n-1 \ge k_1 > k_2 > \cdots > k_r\ge 1$
and $2\le \ell_1 < \ell_2 < \cdots < \ell_r$, and $a_1, \ldots, a_r$ positive integers. Set $\mathcal{C} = \{(k_1, \ell_1),\ldots, (k_r, \ell_r)\}$. Then there exists a strongly stable submodule $M$ of $S^m$, $m\ge 1$, with extremal Betti numbers $\beta_{k_i, k_i+\ell_i}(M)=a_i$, for $i=1,\ldots, r$, if and only if
\begin{enumerate}
\item[\em (1)] for $m=1$, $k_1+1=n$, whenever $r=n-2$, $\ell_1=2$ and $k_r\ge 2$, or whenever $r=n-1$, $\ell_1\ge 3$ and $k_r\ge 1$;
\item[\em (2)] $1 \le a_i \le m\binom{k_i+\ell_i-1}{\ell_i-1}$;
\item[\em (3)] for all $i\in \{1,2,\ldots,r\}$, there exists  an $m$-tuple of non-negative integers $(a_{i,1}, a_{i,2}, \ldots,$\qquad $a_{i,m})$  such that
\begin{enumerate}
\item[\em (i)] $a_i= \sum_{h=1}^m a_{i,h}$; 
\item[ \em (ii)] for all $h\in \{1,2,\ldots,m\}$ for which the $r$-tuple $(a_{1,h}, a_{2,h}, \ldots, a_{r,h})$ has at least one non-zero entry,
if $\mathcal{C}_h$ is the corner subsequence of the corner sequence $\mathcal{C}$ consisting of all pairs $(k_i, \ell_i)$ such that $a_{i,h}\neq 0$, set
\begin{equation} 
\label{matrix}
\mathcal{A} = \left( \begin{array}{cccc}
a_{1,1} & a_{1,2} & \cdots  & a_{1,m} \\
a_{2,1} & a_{2,2} & \cdots  & a_{2,m} \\
\cdots & \cdots & \cdots  & \cdots \\
a_{r,1} & a_{r,2} & \cdots & a_{r,m} 
\end{array} \right),
\end{equation} 
one has
\begin{enumerate}
\item[{\rm (ii.1)}] $1 \le a_{i,h}\le \binom{k_i+\ell_i-1}{\ell_i-1}$, for $\vert \mathcal{C}_h \vert=1$,
or
\item[{\rm (ii.2)}] $1 \leq a_{i,h}\leq \vert A_i\setminus \LexShad^{\ell_i-\ell_j}(A_j)\vert$, for  $\vert \mathcal{C}_h \vert >1$, where $1\le j\le i-1$, is the greatest integer such that $a_{j,h}\neq 0$,  and 
$A_i$ ($A_j$, respectively) are the sets of monomials of degree $\ell_i$ (degree $\ell_j$, respectively) defined in the statements of Proposition \ref{pro:twopairs} and Theorem \ref{pro:general}.  
\end{enumerate}

\end{enumerate}
\end{enumerate}
\end{Theorem}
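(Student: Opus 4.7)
The overall strategy is to reduce the module question to repeated applications of the ideal case (Theorem~\ref{pro:general}), one per column of the corner matrix. Because $\deg(e_h)=0$ for every $h$, the Betti numbers of $M=\oplus_{h=1}^{m}I_h e_h$ decompose additively as $\beta_{k,k+\ell}(M)=\sum_{h=1}^{m}\beta_{k,k+\ell}(I_h)$, so the postulated decomposition $a_i=\sum_h a_{i,h}$ is exactly the row sum of the corner matrix $C_M$. The substantive content of (3) is therefore the transfer of the extremality of $\beta_{k_i,k_i+\ell_i}(M)$, column by column, to an extremality statement for each corner ideal $I_h$.

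For the necessity direction I start with an $M$ realising the prescribed extremal Betti numbers and use Lemma~\ref{lem:sub} to replace it by $\widetilde M=\oplus\widetilde I_h e_h$ with the same corner data and with $\C(\widetilde I_h)=\C_{\widetilde M}(\widetilde I_h)$ for every corner ideal. Setting $a_{i,h}:=\beta_{k_i,k_i+\ell_i}(\widetilde I_h)$, condition (3)(i) is immediate and (2) follows by summing the Eliahou--Kervaire bound of Remark~\ref{value} over $h$. For each $h$ whose column is non-zero, $\widetilde I_h$ is a strongly stable ideal whose full corner sequence is precisely $\mathcal{C}_h$, so Theorem~\ref{pro:general} (and Proposition~\ref{pro:twopairs} in the $\ell_1=2$ range) applied to $\widetilde I_h$ yields (ii.1) when $|\mathcal{C}_h|=1$ and (ii.2) when $|\mathcal{C}_h|>1$; the index $j$ appearing in (ii.2) is by construction the immediate predecessor of $i$ inside $\mathcal{C}_h$. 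Condition (1) is simply the $m=1$ specialisation, where the unique column coincides with the whole matrix and the restriction of Theorem~\ref{pro:general} applies directly.

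For sufficiency I reverse the construction. Given a matrix $(a_{i,h})$ satisfying (3), for each $h$ with $\mathcal{C}_h\neq\emptyset$ I feed $\mathcal{C}_h$ and the values $(a_{i,h})_{(k_i,\ell_i)\in\mathcal{C}_h}$ into the constructive $(3)\Rightarrow(2)$ direction of Proposition~\ref{pro:twopairs}/Theorem~\ref{pro:general}, obtaining a strongly stable ideal $I_h\subsetneq S$ whose corner sequence equals $\mathcal{C}_h$ and whose extremal Betti values match column $h$; for empty columns I set $I_h=0$. The submodule $M:=\oplus_h I_h e_h$ is strongly stable by Definition~\ref{def:stable}, and Characterization~\ref{char:CU} applied to $M$ then identifies its corners as exactly the union $\bigcup_h \mathcal{C}_h=\mathcal{C}$, with values $\sum_h a_{i,h}=a_i$.

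The main obstacle will be this last verification: the direct sum must neither create spurious corners nor destroy intended ones. The key point is that $\m(ue_h)=\m(u)$ and $\deg(ue_h)=\deg(u)$ are independent of $h$, so the maximum and inequality conditions in Characterization~\ref{char:CU} for $M$ are simply the coordinate-wise maxima of the corresponding conditions for the $I_h$. Since each $I_h$ has been constructed so that its generators in degree $\ell_i$ with $\m=k_i+1$ realise its corner at $(k_i,\ell_i)$, while all generators of strictly higher degree have strictly smaller $\m$, these two properties survive the direct sum intact and force $\C(M)=\mathcal{C}$ with the prescribed multiplicities. A secondary but genuine subtlety is that in (ii.2) the predecessor index $j$ must be read inside $\mathcal{C}_h$, not inside the full $\mathcal{C}$, so the sets $A_i,A_j$ in the bound are those produced by Theorem~\ref{pro:general} applied to the sub-sequence $\mathcal{C}_h$; it is precisely this column-local reading that makes both the necessity bound and the explicit construction go through without interference across columns.
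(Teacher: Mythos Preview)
Your proposal is correct and follows essentially the same strategy as the paper: reduce to the ideal case column by column via Lemma~\ref{lem:sub} for necessity, and assemble the module from per-column strongly stable ideals (built by the constructive direction of Proposition~\ref{pro:twopairs}/Theorem~\ref{pro:general}) for sufficiency, with Characterization~\ref{char:CU} governing the corner check.

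The one place where you deviate from the paper is the treatment of columns $h$ with $\mathcal{C}_h=\emptyset$: you set $I_h=0$, whereas the paper fills these slots with the explicit non-corner-contributing ideals $G(I_j)=\mathcal{L}(x_1^{\ell_1-1},\ldots,x_1^{\ell_1-2}x_{k_r+1})$. Both choices are legitimate under Definition~\ref{def:stable} (the zero ideal is vacuously strongly stable and properly contained in $S$), and neither affects $\C(M)$ or the values $a_i$; the paper's choice simply keeps every component non-trivial. Your added paragraph verifying that the direct sum neither creates nor destroys corners is a point the paper handles more implicitly via the case split on $|\col^*(\mathcal{A})|$, so your version is arguably cleaner there.
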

\begin{proof} For $m=1$, the assertions (1)--(3) follow from Propositions \ref{cor:n-2}, \ref{pro:twopairs}, Theorem \ref{pro:general} and Remarks \ref{value}, \ref{value3}. Assume $m>1$. Let $M=\oplus_{j=1}^m I_je_j$ be a strongly stable submodule of $S^m$ with extremal Betti numbers $\beta_{k_i, k_i+\ell_i}(M)=a_i$, for $i=1,\ldots, r$. 
Condition (2) follows  from Remark \ref{value}. Furthermore, one can observe that $\beta_{k_i, k_i+\ell_i}(M)$ equals the value $m\binom{k_i+\ell_i-1}{\ell_i-1}$ if the corner $(k_i, \ell_i)$ is determined by every ideal $I_j\in \mathcal{D}(M)$ and 
$\beta_{k_i, k_i+\ell_i}(I_j) = \binom{k_i+\ell_i-1}{\ell_i-1}$, for each $j$.
Let $(k_i, \ell_i)\in \C(M)$ and consider the $(k_i, \ell_i)$-\emph{sequence} $C_{(k_i, \ell_i)}$ of $M$, $i=1, \ldots, r$. Setting $C_{(k_i, \ell_i)} = (a_{i,1}, a_{i,2}, \ldots, a_{i,m})$, one has $a_i= \sum_{h=1}^m a_{i,h}$. The non-zero components of $C_{(k_i, \ell_i)}$ are determined by the corner ideals $I_j$ of $M$, $1\le j\le m$, such that $(k_i, \ell_i) \in \C_M(I_j)$. From Lemma \ref{lem:sub}, we may assume that $\C_M(I_j) = \C(I_j)$. If  $I_h$ is such an ideal, then $\beta_{k_i, k_i+\ell_i}(I_h)=a_{i,h}$.
We distinguish two cases: $\vert \C_M(I_h)\vert =1$ and $\vert \C_M(I_h)\vert >1$.\\
(Case 1.) Let $\vert \C_M(I_h)\vert =1$. Then Condition (3),(ii.1) follows from Remark \ref{value}. \\
(Case 2.) Let $\vert \C_M(I_h)\vert >1$.  If $(k_j, \ell_j)$, with $1\le j \le i-1$, is the smallest element of $\C_M(I_h)$ (with respect to $\succ$) such that $(k_j, \ell_j)\succ (k_i, \ell_i)$,
then Condition (3), (ii.2) follows from Theorem \ref{pro:general} and Proposition \ref{pro:twopairs}.

Conversely, let $a_1, \ldots, a_r$ be positive integers satisfying Conditions (2) and (3). First of all note that Condition (1) assures the existence of a strongly stable ideal of $S$ of initial degree $2$ (of initial degree $\ge 3$, respectively) with $n-2$ extremal Betti numbers ($n-1$ extremal Betti numbers, respectively) in the given positions. 

Consider the $r\times m$ matrix defined in (\ref{matrix}).
Our purpose is to construct a strongly stable submodule $M$ of $S^m$ such that $C_M = \mathcal{A}$.

Denote by $\col^*(\mathcal{A})$ the set consisting of the columns of the matrix $\mathcal{A}$ having  at least one non-zero entry. We distinguish two cases: 
$\vert \col^*(\mathcal{A})\vert = 1$ and $\vert \col^*(\mathcal{A})\vert > 1$.\\
(Case 1.) Let $\vert \col^*(\mathcal{A})\vert=1$. Consider the monomial submodule $M=\oplus_{j=1}^m I_je_j$ of $S^m$ defined as follows: 
\begin{enumerate}
\item[-] $G(I_j) = \mathcal{L}(x_1^{\ell_1-1}, \ldots, x_1^{\ell_1-2}x_{k_r+1})$, for $j=1, \cdots, m-1$; 
\item[-] $I_m$ is the strongly stable ideal of $S$ generated in degrees $\ell_1 < \cdots < \ell_r$ with extremal Betti numbers $\beta_{k_i, k_i+\ell_i}(I_m)=a_i$, for $i=1, \ldots, r$, and constructed according to the criterion used in Propositions \ref{pro:twopairs}, \ref{pro:seven} (see also \cite[Theorem 3.1]{CU2}) .
\end{enumerate}
Since the ideals $I_j$, $j=1, \ldots, m-1$, do not give any contribution to the computation of the extremal Betti numbers of $M$ (Characterization \ref{char:CU}), it is clear that with these choices $M$ is a strongly stable submodule of $S^m$ with extremal Betti numbers $\beta_{k_i, k_i+\ell_i}(M)=a_i$, for $i=1, \ldots, r$.  \\
(Case 2.) Let $\vert \col^*(\mathcal{A})\vert > 1$.  
We distinguish two subcases: $\vert \col^*(\mathcal{A})\vert = m$; $\vert \col^*(\mathcal{A})\vert < m$.\\
(Subcase 2.1) Suppose $\vert \col^*(\mathcal{A})\vert = m$. Let $\col_h(\mathcal{A})= (a_{1,h}, a_{2,h},\ldots, a_{r,h})$ be the $h$-th column of $\mathcal{A}$, $1\le h \le m$, and set
\[\mathcal{C}_h = \{(k_i, \ell_i) \,:\, a_{i,h}\neq 0, \,\, \mbox{for $i=1, \ldots, r$}\}.\]
One can observe that Condition (3),(ii) assures the existence of a strongly stable ideal $I_h\subsetneq S$, for each $h=1, \ldots, m$, such that:
\begin{enumerate}
\item[-] $\C(I_h) = \mathcal{C}_h$;
\item[-] $I_h$ is generated in the corner degrees of $\mathcal{C}_h$;
\item[-] $\beta_{k_i, k_i+\ell_i}(I_h)=a_{i,h}$, for $i=1, \ldots, r$.
\end{enumerate} 
Indeed, because of the behavior of the integers $a_{i,h}$, such an ideal can be obtained by using the same criterion as in Proposition \ref{pro:twopairs} (see also Proposition \ref{pro:seven} and \cite[Theorem 3.1]{CU2}). Hence, $M= \oplus_{h=1}^m I_he_h$ is the desired strongly stable submodule of $S^m$.\\
(Subcase 2.2) Let $\vert \col^*(\mathcal{A})\vert = q < m$. With the same notations as in (Subcase 2.1), let 
$\col^*(\mathcal{A}) = \{\col_{h_1}(\mathcal{A}), \ldots, \col_{h_q}(\mathcal{A}), \, 1\le h_1< \cdots < h_q \le m\}$. Setting
$\col_{h_p}(\mathcal{A})= (a_{1,h_p}, a_{2,h_p},\ldots, a_{r,h_p})$ and $\mathcal{C}_{h_p} = \{(k_i, \ell_i) \,:\, a_{i,h_p}\neq 0, \,\, \mbox{for $i=1, \ldots, r$}\}$, for $p =1, \ldots, q$, let $I_{h_p}$ be the strongly stable ideal of $S$ generated in the corner degrees of $\mathcal{C}_{h_p}$ such that $\C(I_{h_p}) = \mathcal{C}_{h_p}$ and
$\beta_{k_i, k_i+\ell_i}(I_{h_p})=a_{i,h_p}$, for $i=1, \ldots, r$.  Consider the monomial submodule $M= M_1 \oplus M_2$ of $S^m$ defined as follows: 
\begin{enumerate}
\item[-] $M_1 = \oplus_{j=1}^{m-q}I_je_j\subsetneq S^{m-q}$, with $G(I_j)= \mathcal{L}(x_1^{\ell_1-1}, \ldots, x_1^{\ell_1-2}x_{k_r+1})$, for $j=1, \ldots, m-q$;
\item[-]  $M_2 = \oplus_{p=1}^{q}I_{h_p}e_{m-q+p}\subsetneq S^q$.
\end{enumerate}
Due to the structure of $M_1$ and $M_2$, one has that $M$ is a strongly stable submodule of $S^m$ with $C_M = \mathcal{A}$. Note that $M_1$ does not give any contribution to the computation of the extremal Betti numbers of $M$.
\end{proof}

The next example illustrates the above theorem.
\begin{Example} \em Let $\mathcal{C}= \{(5,2), (3,3),(2,5)\}$ be a corner sequence and consider the following three positive integers: 
\[
a_1=3, \quad a_2=7, \quad a_3=4.
\] 
Let $S=K[x_1, \ldots, x_6]$. Set $k_1 = 5$, $k_2 = 3$, $k_3 = 2$ and  $\ell_1 = 2$, $\ell_2 = 3$, $\ell_3 = 5$. We want to construct a strongly stable submodule $M \subsetneq S^3$ with $\C(M)= \mathcal{C}$ and such that $\beta_{k_1, k_1+\ell_1}(M) = a_1$, $\beta_{k_2, k_2+\ell_2}(M) = a_2$, $\beta_{k_3, k_3+\ell_3}(M) = a_3$. 
 
An admissible choice for the sequences of non-negative integers $(a_{i,1},a_{i,2},a_{i,3})$ ($i=1,2,3$) such that 
$a_{i,1}+a_{i,2}+a_{i,3}=a_i$ is the following one:
\begin{equation}\label{terne}
(1,0,2), \quad (3,4,0), \quad (1,2,1).
\end{equation}
Either the choice for the integers $a_i$, or the choice for the triplets $(a_{i,1},a_{i,2},a_{i,3})$ ($i=1,2,3$), are forced by Remark \ref{value} together with Propositions \ref{pro:twopairs}, \ref{pro:seven} and \cite[Theorem 3.1]{CU2}.

Indeed, if one considers the pairs $(5,2), (3,3),(2,5)$, then there exists a strongly stable ideal $I$ of $S$ with extremal Betti numbers $\beta_{5, 5+2}(I) = b_1$, $\beta_{3, 3+3}(I) = b_2$, $\beta_{2, 2+5}(I) = b_3$, if and only if $1\le b_1\le \vert A_1\setminus \LexShad(A_0)\vert$, $1\le b_2 \le \vert A_2\setminus \LexShad(A_1)\vert$ and $1\le b_3 \le \vert A_3\setminus \LexShad^2(A_2)\vert$ (Proposition \ref{pro:seven}), where
$A_1 = \{u \in A(5,2) \,: \,u \ge x_1x_6\}$, $A_2 = \{u \in A(3,3)\,:\,u\ge x_2x_4^2\}$, and $A_3 = \{u \in A(2,5)\,:\,u\ge x_3^5\}$. 
%Hence, $b_1=1$; $1\le b_2 \le 3$ and $b_3=1$. 
If one considers the pairs $(3,3),(2,5)$, then there exists a strongly stable ideal $I$ of $S$ with extremal Betti numbers $\beta_{3, 3+3}(I) = c_1$, $\beta_{2, 2+5}(I) = c_2$, if and only if $1\le c_1\le \vert A_1\setminus \LexShad(A_0)\vert$ and $1\le c_2 \le \vert A_2\setminus \LexShad^2(A_1)\vert$ (\cite[Theorem 3.1]{CU2}), where
$A_1 = \{u \in A(3,3)\,:\,u\ge x_2x_4^2\}$, and $A_2 = \{u \in A(2,5)\,:\,u\ge x_3^5\}$. 
%Hence, $1\le c_1\le 7$ and $c_2=1$.
Finally, if one considers the pairs $(5,2),(2,5)$, then there exists a strongly stable ideal $I$ of $S$ with extremal Betti numbers $\beta_{5, 5+2}(I) = d_1$, $\beta_{2, 2+5}(I) = d_2$, if and only if $1\le d_1\le \vert A_1\setminus \LexShad(A_0)\vert$ and $1\le d_2 \le \vert A_2\setminus \LexShad^3(A_1)\vert$ (Proposition  \ref{pro:twopairs}), where
$A_1 = \{u \in A(5,2) \,: \,u \ge x_2x_6\}$, and $A_2 = \{u \in A(2,5)\,:\,u\ge x_3^5\}$. 
%Hence, $1\le d_1 \le 2$ and $d_2=1$.

The matrix whose rows are the triplets in (\ref{terne}) is the following one:
\[\mathcal{A}= \left(
\begin{array}{lll}
1 & 0 & 2\\
3 & 4 & 0\\
1 & 2 & 1\\
\end{array}
\right).
\]
From Proposition \ref{pro:seven}, there exists a strongly stable ideal $I_1\subsetneq S$ (associated to $\col_1(\mathcal{A})$) such that $\C(I_1) = \mathcal{C}$, $\beta_{k_1, k_1+\ell_1}(I_1) = 1$, $\beta_{k_2, k_2+\ell_2}(I_1)= 3$, $\beta_{k_3, k_3+\ell_3}(I_1) = 1$; from \cite[Theorem 3.1]{CU2}, there exists a strongly  stable ideal $I_2\subsetneq S$ (associated to $\col_2(\mathcal{A})$) such that $\C(I_2) =\{(3,3), (2,5)\}$, $\beta_{k_2, k_2+\ell_2}(I_2)= 4$, $\beta_{k_3, k_3+\ell_3}(I_2) = 2$ and from Proposition \ref{pro:twopairs}, there exists a strongly  stable ideal $I_3\subsetneq S$ (associated to $\col_3(\mathcal{A})$) such that $\C(I_3) =\{(5,2), (2,5)\}$,  $\beta_{k_1, k_1+\ell_1}(I_3)= 2$, $\beta_{k_3, k_3+\ell_3}(I_3) = 1$. More precisely:
\[\begin{array}{ll}
I_1 = (x_1^2, x_1x_2,x_1x_3,x_1x_4,x_1x_5, x_1x_6, x_2^3, x_2^2x_3,x_2^2x_4,x_2x_3^2, x_2x_3x_4, x_2x_4^2, x_3^5),\\
I_2 = (x_1^3, x_1^2x_2,x_1^2x_3,x_1^2x_4,x_1x_2^2, x_1x_2x_3, x_1x_2x_4,x_1x_3^2, x_1x_3x_4,x_1x_4^2, x_2^5, x_2^4x_3, x_2^3x_3^2),\\
I_3 = (x_1^2, x_1x_2,x_1x_3,x_1x_4,x_1x_5, x_1x_6, x_2^2, x_2x_3,x_2x_4,x_2x_5,x_2x_6, x_3^5),
\end{array}\]
and $M = \oplus_{i=1}^3I_ie_i$ is the wanted strongly stable submodule of $S^3$.

\end{Example}
\section*{Acknowledgments}
The author thanks the anonymous referee for his/her careful reading and useful comments leading to a substantial  improvement of the quality of the paper.

\end{document}